\tikzset{
	level 1/.style = {sibling distance = 1.5cm},
	level 2/.style = {sibling distance = 0.8cm},
    level distance = 0.9 cm
}
\tikzstyle{snakeline} = [decorate, decoration={snake, amplitude=.4mm, segment length=2mm}]
\tikzset{every tree node/.style={minimum width=0.1cm,draw,circle},
         blank/.style={draw=none},
         edge from parent/.style=
         {draw,edge from parent path={(\tikzparentnode) -- (\tikzchildnode)}},
         level distance=0.8cm}
\newtheorem{theorem}{Theorem}
\newtheorem{corollary}[theorem]{Corollary}
\newtheorem{proposition}[theorem]{Proposition}
\newtheorem{lemma}[theorem]{Lemma}
\newtheorem{example}[theorem]{Example}
\newtheorem{problem}[theorem]{Problem}
\newcommand{\men}{\mathcal{E}_n}
\newcommand{\drop}{{\rm drop\,}}
\newcommand{\altrun}{{\rm altrun\,}}
\newcommand{\run}{{\rm run\,}}
\newcommand{\fap}{{\rm fap\,}}
\newcommand{\ap}{{\rm ap\,}}
\newcommand{\des}{{\rm des\,}}
\newcommand{\exc}{{\rm exc\,}}
\newcommand{\fix}{{\rm fix\,}}
\newcommand{\msn}{\mathfrak{S}_n}
\newcommand{\ms}{\mathfrak{S}}
\newcommand{\as}{{\rm as\,}}
\newcommand{\lrf}[1]{\lfloor #1\rfloor}
\newcommand{\mbn}{{\mathcal B}_n}
\newcommand{\mqn}{{\mathcal Q}_n}
\DeclareMathOperator{\R}{\mathbb{R}}
\newcommand{\Stirling}[2]{\genfrac{\{}{\}}{0pt}{}{#1}{#2}}
\title{Determinantal representations of alternating run polynomials}
\author[S.-M.~Ma]{Shi-Mei Ma}
\address{School of Mathematics and Statistics, Shandong University of Technology, Zibo, Shandong 255000, P.R. China}
\email{shimeimapapers@163.com (S.-M. Ma)}
\author[H.~Bian]{Hong Bian}
\address{School of Mathematical Sciences, Xinjiang Normal University, Urumqi Xinjiang 830017, P.R. China}
\email{bh1218@163.com (H. Bian)}
\author[J.-Y.~Liu]{Jun-Ying Liu}
\address{School of Mathematics and Statistics, Shandong University of Technology, Zibo, Shandong 255000, P.R. China}
\email{jyliu6@163.com (J.-Y. Liu)}
\author{Jean Yeh}
\address{Department of Mathematics, National Kaohsiung Normal University, Kaohsiung 82444, Taiwan}
\email{chunchenyeh@nknu.edu.tw (J. Yeh)}
\author[Y.-N. Yeh]{Yeong-Nan Yeh}
\address{College of Mathematics and Physics, Wenzhou University, Wenzhou 325035, P.R. China}
\email{mayeh@math.sinica.edu.tw (Y.-N. Yeh)}
\subjclass[2010]{Primary 05A05; Secondary 05A19}
\begin{document}

\maketitle
\begin{abstract}
Based on a determinantal formula for the higher derivative of a quotient of two functions,
we first present the determinantal expressions of Eulerian polynomials and Andr\'e polynomials.
In particular, we discover that the Euler number (number of alternating permutations)
can be expressed as a lower Hessenberg determinant.
We then investigate the determinantal representations of the
up-down run polynomials and the types $A$ and $B$ alternating run polynomials. As applications,
we deduce several new recurrence relations, which imply the multiplicity of $-1$ in these
three kinds of polynomials. And then, we provide two determinantal representations for the alternating
run polynomials of dual Stirling permutations. In particular, we discover a close connection between
the alternating run polynomials of dual Stirling permutations and the type $B$ Eulerian polynomials.
\bigskip

\noindent{\sl Keywords}: Determinantal formulas; Alternating run polynomials; Up-down run polynomials
\end{abstract}
\date{\today}
%
\section{Introduction}
Let $d_n$ be the {\it derangement number}, which counts
permutations on $[n]=\{1,2,\ldots,n\}$ such that leaves no elements in their original places.
The first few derangement numbers are $0,1,2,9,44,\ldots$.
The generating function of the derangement numbers (see~\cite{Brenti90} for instance) is given by
\begin{equation}\label{dn}
\sum_{n=0}^\infty d_n \frac{z^n}{n!}=\frac{\mathrm{e}^{-z}}{1-z}.
\end{equation}

In~\cite[Eq.~(11)]{Kit93}, it was given that for $n\geqslant 1$, one has
 \begin{equation*}
   d_{n+2}=\begin{vmatrix}
2&-1&0&\cdots&0&0&0\\
3&3&-1&\cdots&0&0&0\\
0&4&4&\cdots&0&0&0\\
\vdots&\vdots&\vdots&\ddots&\vdots&\vdots&\vdots\\
0&0&0&\cdots&n-1&-1&0\\
0&0&0&\cdots&n&n&-1\\
0&0&0&\cdots&0&n+1&n+1
    \end{vmatrix}_{n\times n}.
  \end{equation*}
Since then, various determinantal representations of $d_n$ have been
pursued by several authors, see~\cite{Chow2402,Vein99}.
The following result is stated in~\cite[p.~40]{Bourbaki04}. See~\cite[Lemma~6]{Chow2401} for a proof of it.
\begin{lemma}\label{lemma1}
Let $u=u(x)$ and $v=v(x)$ be two real functions which are $n$ times differentiable on an
interval $I\subset \R$. If one puts $\frac{\mathrm{d}^n}{\mathrm{d}x^n}\left(\frac{u}{v}\right)=(-1)^n\frac{w_n}{v^{n+1}}$ at every point where $v(x)\neq0$, then
 \begin{equation*}
  w_n=\begin{vmatrix}
u&v&0&0&\cdots&0&0&0\\
u'&v'&\binom{1}{1}v&0&\cdots&0&0&0\\
u''&v''&\binom{2}{1}v'&\binom{2}{2}v\cdots&0&0&0\\
\vdots&\vdots&\vdots&\vdots&\ddots&\vdots&\vdots&\vdots\\
u^{(n-1)}&v^{(n-1)}&\binom{n-1}{1}v^{(n-2)}&\binom{n-1}{2}v^{(n-3)}&\cdots&\cdots&\binom{n-1}{n-2}v{'}&\binom{n-1}{n-1}v\\
u^{(n)}&v^{(n)}&\binom{n}{1}v^{(n-1)}&\binom{n}{2}v^{(n-2)}&\cdots&\binom{n}{n-3}v^{(3)}&\binom{n}{n-2}v{''}&\binom{n}{n-1}v{'}
    \end{vmatrix}_{(n+1)\times (n+1)}.
  \end{equation*}
\end{lemma}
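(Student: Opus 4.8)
The plan is to recast the problem as a triangular linear system coming from the Leibniz rule and then isolate the top derivative by Cramer's rule. Writing $f=u/v$, so that $u=fv$, the general Leibniz formula gives, for each $i=0,1,\ldots,n$,
\[
u^{(i)}=\sum_{k=0}^{i}\binom{i}{k}f^{(k)}v^{(i-k)}.
\]
I would read these $n+1$ identities as a linear system in the unknowns $f^{(0)},f^{(1)},\ldots,f^{(n)}$, whose coefficient matrix $A$ has $(i,k)$-entry $\binom{i}{k}v^{(i-k)}$ for $k\leqslant i$ and $0$ for $k>i$. This matrix is lower triangular with every diagonal entry equal to $v$, so $\det A=v^{n+1}$.

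Next I would apply Cramer's rule to extract the last unknown $f^{(n)}$. Replacing the column of $A$ indexed by $k=n$ (the coefficients of $f^{(n)}$) with the right-hand side $(u^{(0)},\ldots,u^{(n)})^{T}$ produces a matrix $A_n$ satisfying $f^{(n)}=\det A_n/\det A=\det A_n/v^{n+1}$. Since by hypothesis $\frac{\mathrm{d}^n}{\mathrm{d}x^n}(u/v)=f^{(n)}=(-1)^n w_n/v^{n+1}$, this already gives $w_n=(-1)^n\det A_n$, reducing the lemma to a purely combinatorial comparison of $A_n$ with the displayed matrix.

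The remaining step is to recognize the displayed matrix as $A_n$ after a single column rearrangement. In $A_n$ the $u$-column sits in the last position (column $n$), while the surviving $v$-columns occupy positions $0,\ldots,n-1$ with entries $\binom{i}{k}v^{(i-k)}$; in the statement the $u$-column has been moved to the front, so that column $j\geqslant 1$ carries $\binom{i}{j-1}v^{(i-j+1)}$. Cyclically shifting the last column to the first is an $(n+1)$-cycle on the columns, hence multiplies the determinant by $(-1)^n$, so the displayed determinant equals $(-1)^n\det A_n=w_n$, as claimed.

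The step I expect to need the most care is the bookkeeping of the two sources of sign together with the truncation of the final row: the factor $(-1)^n$ built into the definition of $w_n$ must cancel exactly against the $(-1)^n$ coming from the column cycle, and one must check that the last row of the displayed matrix stops at $\binom{n}{n-1}v'$ precisely because replacing the $f^{(n)}$-column removes the diagonal entry $\binom{n}{n}v$ that would otherwise appear. Verifying that the triangular pattern of zeros above the superdiagonal matches the statement row by row is routine, but it is exactly where an off-by-one error is easiest to introduce.
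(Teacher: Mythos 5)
Your proof is correct and follows essentially the same route the paper uses for the grammatical analogue (Lemma~\ref{lemma2}): rewrite $u=fv$, apply the Leibniz rule to obtain a lower triangular system with determinant $v^{n+1}$, solve for the top derivative by Cramer's rule, and account for the sign $(-1)^n$ by cyclically moving the $u$-column from last to first. The paper itself only cites Bourbaki and Chow for Lemma~\ref{lemma1}, but its proof of Lemma~\ref{lemma2} is precisely your argument with $D_G$ in place of $\mathrm{d}/\mathrm{d}x$.
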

By Lemma~\ref{lemma1}, Munarini~\cite{Munarini20} showed that the derangement
polynomial $d_n(q)$ (major polynomial over the set of derangements on $[n]$)
can be expressed as the determinant of either an $n\times n$ tridiagonal matrix or an $n\times n$ lower Hessenberg matrix,
Qi et al.~\cite{Qi15,Qi16,Qi21} deduced determinantal representations of derangement numbers, Eulerian polynomials, Bernoulli polynomials and derivative polynomials,
Chow~\cite{Chow23,Chow2401,Chow2402}
deduced determinantal representations of $q$-derangement numbers and Eulerian polynomials of types $B$ and $D$.
In particular, using~\eqref{dn}, Qi-Wang-Guo~\cite[Theorem~1]{Qi16} found that $d_n$ can be expressed as
a tridiagonal determinant:
 \begin{equation*}
   d_{n}=\begin{vmatrix}
-1&1&0&0&0&\cdots&0&0&0\\
0&0&1&0&0&\cdots&0&0&0\\
0&-1&1&1&0&\cdots&0&0&0\\
0&0&-2&2&1&\cdots&0&0&0\\
0&0&0&-3&3&\cdots&0&0&0\\
\vdots&\vdots&\vdots&\vdots&\vdots&\ddots&\vdots&\vdots&\vdots\\
0&0&0&0&0&\cdots&n-3&1&0\\
0&0&0&0&0&\cdots&-(n-2)&n-2&1\\
0&0&0&0&0&\cdots&0&-(n-1)&n-1
    \end{vmatrix}=\begin{vmatrix}e_{ij}\end{vmatrix}_{(n+1)\times (n+1)},
  \end{equation*}
where $$e_{ij}=\left\{
                 \begin{array}{ll}
                   1, & \hbox{if $i-j=-1$;} \\
                   i-2, & \hbox{if $i-j=0$;} \\
                   2-i, & \hbox{if $i-j=1$;} \\
                   0, & \hbox{if $i-j\neq 0,\pm 1$.}
                 \end{array}
               \right.
$$

Let $\msn$ denote the symmetric group of all permutations of $[n]:=\{1,2,3,\ldots,n\}$. For any $\pi\in\msn$, written
as the word $\pi(1)\pi(2)\cdots\pi(n)$, the entry $\pi(i)$ is called a {\it fixed point} if $i\in [n]$ and $\pi(i)=i$.
Then $d_n=\#\{\pi\in\msn: \fix(\pi)=0\}$.
For any $\pi\in\msn$, the entry $\pi(i)$ is called
\begin{itemize}
  \item a {\it descent} if $i\in [n-1]$ and $\pi(i)>\pi(i+1)$;
  \item an {\it excedance} if $i\in [n-1]$ and $\pi(i)>i$;
    \item a {\it drop} if $i\in \{2,3,\ldots,n\}$ and $\pi(i)<i$.
\end{itemize}
Let $\des(\pi)$ (resp.~$\exc(\pi)$, $\drop(\pi)$) be the number of descents (resp.~excedances, drops) of $\pi$.
It is well known that descents, excedances and drops are equidistributed over $\msn$, and their common enumerative polynomial
is the {\it Eulerian polynomial} (see~\cite[A008292]{Sloane}):
$$A_n(x)=\sum_{\pi\in\msn}x^{\des(\pi)+1}=\sum_{\pi\in\msn}x^{\exc(\pi)+1}=\sum_{\pi\in\msn}x^{\drop(\pi)+1}.$$
The exponential generating function of $A_n(x)$ is given as follows:
\begin{equation}\label{Axz-EGF}
A(x;z)=\sum_{n=0}^{\infty}A_n(x)\frac{z^n}{n!}=\frac{1-x}{1-x\mathrm{e}^{z(1-x)}}.
\end{equation}

Let $\{f_n(x)\}$ be a sequences of polynomials, and let $f(x;z)=\sum_{n\geqslant 0}f_n(x)\frac{z^n}{n!}$.
If $f(x;z)$ can be written as a quotient
\begin{equation*}\label{uv}
f(x;z)=\frac{u(x;z)}{v(x;z)},
\end{equation*}
then using Lemma~\ref{lemma1}, one can derive that
$f_n(x)$ can be expressed as a determinant of order $n+1$, since $f_n(x)=\lim_{z\rightarrow 0}\frac{\partial^n}{\partial z^n}f(x;z)$.
It should be noted that both $u(x;z)$ and $v(x;z)$ have simple expressions in the most of the aforementioned studies, see~\cite{Chow2401,Chow2402} for instances.
For example,
let $u(x;z)=1-x$ and $v(x,z)=1-x\mathrm{e}^{z(1-x)}$. Then for $k\geqslant 1$,
$\frac{\partial^k}{\partial z^k} u(x;z)=0$ and $\frac{\partial^k}{\partial z^k} v(x;z)=-x(1-x)^k\mathrm{e}^{z(1-x)}$.
Applying Lemma~\ref{lemma1}, and letting $z\rightarrow 0$, Chow~\cite{Chow23} found that $A_n(x)$ can be expressed as the following lower
Hessenberg determinant:
 \begin{equation}\label{Anx-det}
  A_{n}(x)=\begin{vmatrix}
1&1&0&\cdots&0&0\\
0&x&-1&\cdots&0&0\\
0&x(1-x)&2x&\cdots&0&0\\
\vdots&\vdots&\vdots&&\ddots&\vdots\\
0&x(1-x)^{n-2}&\binom{n-1}{1}x(1-x)^{n-3}&\cdots&\binom{n-1}{n-2}x&-1\\
0&x(1-x)^{n-1}&\binom{n}{1}x(1-x)^{n-2}&\cdots&\binom{n}{n-2}x(1-x)&\binom{n}{n-1}x
    \end{vmatrix}_{(n+1)\times(n+1)}.
  \end{equation}

This paper is motivated by the following problem.
\begin{problem}
Assume that
$f(x;z)=\sum_{n\geqslant 0}f_n(x)\frac{z^n}{n!}=\frac{u(x;z)}{v(x;z)}$. If both $u(x;z)$ and $v(x;z)$
have complicated expressions, or we can not deduce explicit formulas for $u(x;z)$ and $v(x;z)$,
how to deduce a determinantal formula of $f_n(x)$?
\end{problem}

For example, the type $A$ alternating run polynomials can be defined by
\begin{equation}\label{RnxAnx}
R_n(x)=\left(\frac{1+x}{2}\right)^{n-1}(1+w)^{n+1}A_n\left(\frac{1-w}{1+w}\right)
\end{equation}
for $n\geqslant 2$ (see David-Barton~\cite[157-162]{David62} and~\cite{Ma20}), where $w=\sqrt{\frac{1-x}{1+x}}$.
By the method of characteristics,
Carlitz~\cite{Carlitz78} proved that
\begin{equation}\label{CarlitzGF}
R(x;z)=\sum_{n=0}^\infty (1-x^2)^{-n/2}\frac{z^n}{n!}\sum_{k=0}^nR(n+1,k)x^{n-k}=\frac{1-x}{1+x}\left(\frac{\sqrt{1-x^2}+\sin z}{x- \cos z}\right)^2.
\end{equation}

The original purpose of this paper is to derive determinantal representation of $R_n(x)$ by using Lemma~\ref{lemma1}.
Not like~\eqref{dn} or~\eqref{Axz-EGF}, the exponential generating function~\eqref{CarlitzGF} seems complicated.
In the next section, we collect the definitions, notations and preliminary
results that will be used in the rest of this work. In particular, we discover that the Euler number $E_{n+1}$ (number of alternating permutations in $\msn$)
can be expressed as a lower Hessenberg determinant of order $n$.
In Section~\ref{section3}, we investigate the up-down run polynomials and
the types $A$ and $B$ alternating run polynomials.
For example, in Theorem~\ref{thmsntn2}, we find that
the type $A$ alternating run $R_{n+1}(x)$ can be expressed as a lower Hessenberg determinant of order $n$:
 \begin{equation*}
\begin{vmatrix}
2x&-1&0&0&\cdots&0&0\\
2xR_2(x)&f_1(x)&-\binom{1}{1}&0&\cdots&0&0\\
2xR_3(x)&f_2(x)&\binom{2}{1}f_1(x)&-\binom{2}{2}&\cdots&0&0\\
\vdots&\vdots&\vdots&\vdots&\ddots&\vdots&\vdots\\
2xR_{n-1}(x)&{f}_{n-2}(x)&\binom{n-2}{1}{f}_{n-3}(x)&\binom{n-2}{2}{f}_{n-4}(x)&\cdots&\binom{n-2}{n-3}f_1(x)&-\binom{n-2}{n-2}\\
2xR_{n}(x)&{f}_{n-1}(x)&\binom{n-1}{1}{f}_{n-2}(x)&\binom{n-1}{2}{f}_{n-3}(x)&\cdots&\binom{n-1}{n-3}f_2(x)&\binom{n-1}{n-2}f_1(x)
    \end{vmatrix},
  \end{equation*}
where $f_n(x)=(-1)^{n+1}(1-x^2)^{\lrf{n/2}}$. In the case when $n=3$, we see that
$$R_4(x)=\begin{vmatrix}
2x&-1&0\\
4x^2&1&-1\\
4x^2+8x^3&-1+x^2&2
    \end{vmatrix}=2 x + 12 x^2 + 10 x^3.$$
In Section~\ref{dualStirling}, we discuss the alternating run polynomials of dual Stirling permutations.
\section{Preliminaries}
\subsection{Lower Hessenberg determinants and context-free grammars}
\hspace*{\parindent}

The $n\times n$ {\it lower Hessenberg matrix} $H_n$ is defined as follows:
\begin{equation*}
H_n=\begin{pmatrix}
h_{11}&h_{12}&0&\cdots&0&0\\
h_{21}&h_{22}&h_{23}&\cdots&0&0\\
h_{31}&h_{32}&h_{33}&\cdots&0&0\\
\vdots&\vdots&\vdots&\ddots&\vdots&\vdots\\
h_{n-1,1}&h_{n-1,2}&h_{n-1,3}&\cdots&h_{n-1,n-1}&h_{n-1,n}\\
h_{n,1}&h_{n,2}&h_{n,3}&\cdots&h_{n,n-1}&h_{n,n}\\
    \end{pmatrix},
  \end{equation*}
where $h_{ij}=0$ if $j>i+1$.
Hessenberg matrices are one of the important matrices in numerical analysis~\cite{Cahill02,Elouafi09,Kilic17}.
Setting $H_0=1$,
Cahill et al.~\cite{Cahill02} gave a recursion for
the determinant of the matrix $H_n$ as follows:
\begin{equation}\label{Hn-recu}
\det H_{n+1}=h_{n+1,n+1}\det H_{n}+\sum_{r=1}^{n}\left((-1)^{n+1-r}h_{n+1,r}\prod_{j=r}^{n}h_{j,j+1}\det H_{r-1}\right).
\end{equation}

Following Chen~\cite{Chen93}, a {\it context-free grammar} $G$ over an alphabet
$V$ is defined as a set of substitution rules replacing a letter in $V$ by a formal function over $V$.
The formal derivative $D_G$ with respect to $G$ satisfies the derivation rules:
$$D_G(u+v)=D_G(u)+D_G(v),~D_G(uv)=D_G(u)v+uD_G(v).$$
So the {\it Leibniz rule} holds:
\begin{equation}\label{Leibniz}
D_G^n(uv)=\sum_{k=0}^n\binom{n}{k}D_G^k(u)D_G^{n-k}(v).
\end{equation}
Recently, context-free grammars have been used extensively in the study of permutations, perfect matchings and increasing trees,
see~\cite{Chen2301,Hao05,Ma23} for instances.

In this paper, we always let $D_G$ be the formal derivative associated with the grammar $G$. As an illustration,
we now recall a classical result in this topic.
\begin{proposition}[{\cite{Dumont96}}]\label{grammar01}
If $G=\{a\rightarrow ab, b\rightarrow ab\}$, then $D_{G}^n(a)=a^{n+1}A_n\left(\frac{b}{a}\right)$. Thus a grammatical description of the Eulerian polynomial is given as follows: $$D_{G}^n(a)|_{a=1,b=x}=A_n(x).$$
\end{proposition}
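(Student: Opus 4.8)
The plan is to argue by induction on $n$, packaging all the combinatorial content into the classical recurrence for the Eulerian polynomials and keeping the grammatical computation purely formal. For the base case $n=0$ one checks directly that $D_G^0(a)=a=a^{1}A_0(b/a)$ with $A_0(x)=1$; as a sanity check, $D_G(a)=ab=a^2\cdot(b/a)=a^2A_1(b/a)$ with $A_1(x)=x$.

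For the inductive step I would first extend $D_G$ from $\mathbb{Q}[a,b]$ to its fraction field $\mathbb{Q}(a,b)$ (a derivation on an integral domain extends uniquely via the quotient rule) and compute its action on the single variable $x:=b/a$. Writing $b=ax$, a direct calculation gives
\begin{equation*}
D_G(x)=\frac{a\,D_G(b)-b\,D_G(a)}{a^2}=\frac{a^2b-ab^2}{a^2}=b-\frac{b^2}{a}=ax(1-x),
\end{equation*}
while $D_G(a^{n+1})=(n+1)a^{n}D_G(a)=(n+1)a^{n+1}b=(n+1)a^{n+2}x$. Assuming $D_G^n(a)=a^{n+1}A_n(x)$, the product rule together with the chain rule for derivations (legitimate since $A_n$ is a polynomial in $x$) yields
\begin{align*}
D_G^{n+1}(a)&=D_G\bigl(a^{n+1}A_n(x)\bigr)=D_G(a^{n+1})\,A_n(x)+a^{n+1}A_n'(x)\,D_G(x)\\
&=(n+1)a^{n+2}x\,A_n(x)+a^{n+2}x(1-x)A_n'(x)\\
&=a^{n+2}\Bigl[(n+1)x\,A_n(x)+x(1-x)A_n'(x)\Bigr].
\end{align*}
It then remains to identify the bracketed expression with $A_{n+1}(x)$, after which $D_G^{n+1}(a)=a^{n+2}A_{n+1}(b/a)$ closes the induction; setting $a=1,\,b=x$ at the end gives the stated grammatical description.

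The crux of the argument, and the only place where the combinatorics of descents enters, is therefore the recurrence $A_{n+1}(x)=(n+1)x\,A_n(x)+x(1-x)A_n'(x)$ for $A_n(x)=\sum_{\pi\in\msn}x^{\des(\pi)+1}$. This is classical, and I would justify it by the standard insertion argument: tracking how $\des$ changes as $n+1$ is inserted into each of the $n+1$ gaps of a permutation of $[n]$ (inserting at a descent of $\sigma$ or at the right end leaves $\des$ unchanged, while inserting at an ascent or at the left end raises it by one) shows that $\widetilde A_n(x):=\sum_{\pi\in\msn}x^{\des(\pi)}$ satisfies $\widetilde A_{n+1}=(1+nx)\widetilde A_n+x(1-x)\widetilde A_n'$, and multiplying through by $x$ gives the form above. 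Alternatively, one may verify it from the generating function \eqref{Axz-EGF} by checking the corresponding first-order PDE $(1-xz)\,\partial_z A=xA+x(1-x)\,\partial_x A$. Either verification is routine; the main obstacle is purely bookkeeping, as the grammatical computation itself is formal and forced.
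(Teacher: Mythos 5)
Your proof is correct, but note that the paper offers no proof of Proposition~\ref{grammar01} at all: it is quoted from Dumont's work, where the argument is a combinatorial one. Dumont proves $D_G^n(a)=\sum_{\pi\in\msn}a^{\,n-\des(\pi)}b^{\,\des(\pi)+1}$ by a \emph{grammatical labeling}: each permutation of $[n]$ is assigned a word in $a,b$ recording its ascent and descent positions (together with the boundary), and one checks that applying the substitution rules $a\to ab$, $b\to ab$ to a labeled permutation enumerates exactly the insertions of $n+1$ into its gaps. Your route is instead purely algebraic: extend $D_G$ to $\mathbb{Q}(a,b)$, compute $D_G(b/a)=ax(1-x)$ and $D_G(a^{n+1})=(n+1)a^{n+2}x$, and reduce the induction to the classical recurrence $A_{n+1}(x)=(n+1)xA_n(x)+x(1-x)A_n'(x)$, which you then justify by the standard insertion argument for descents. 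The computations all check (including the translation between the $\sum x^{\des}$ and $\sum x^{\des+1}$ normalizations, and the base case, where one should use the paper's convention $A_0(x)=1$ coming from the generating function~\eqref{Axz-EGF}). The trade-off is that Dumont's labeling makes the combinatorics visible inside the grammar itself and generalizes to the other grammars used in this paper (Lemmas~\ref{lemma3}, \ref{lemma5}, \ref{Gabc}), whereas your argument outsources the combinatorics to a known recurrence and treats the grammar as a formal derivation --- which is exactly the spirit in which the rest of the paper uses $D_G$, so it is a perfectly serviceable self-contained verification.
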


We now give the grammatical version of Lemma~\ref{lemma1}.
\begin{lemma}\label{lemma2}
Let $G=\{a\rightarrow f_1(a,b,\ldots),~b\rightarrow f_2(a,b,\ldots),\ldots\}$ be a given grammar. For formal functions $u(a,b,\ldots)$ and $v(a,b,\ldots)$, we have $D_G^n\left(\frac{u}{v}\right)=(-1)^n\frac{w_n}{v^{n+1}}$, where
$w_n$ is the following lower
Hessenberg determinant of order $n+1$:
 \begin{equation*}
  w_n=\begin{vmatrix}
u&v&0&0&\cdots&0&0\\
D_G(u)&D_G(v)&\binom{1}{1}v&0&\cdots&0&0\\
D_G^2(u)&D_G^2(v)&\binom{2}{1}D_G(v)&\binom{2}{2}v&\cdots&0&0\\
\vdots&\vdots&\vdots&\vdots&\ddots&\vdots&\vdots\\
D_G^{n-1}(u)&D_G^{n-1}(v)&\binom{n-1}{1}D_G^{n-2}(v)&\binom{n-1}{2}D_G^{n-3}(v)&\cdots&\binom{n-1}{n-2}D_G(v)&\binom{n-1}{n-1}v\\
D_G^{n}(u)&D_G^{n}(v)&\binom{n}{1}D_G^{n-1}(v)&\binom{n}{2}D_G^{n-2}(v)&\cdots&\binom{n}{n-2}D_G^{2}(v)&\binom{n}{n-1}D_G(v)
    \end{vmatrix}.
  \end{equation*}
\end{lemma}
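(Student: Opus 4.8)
The plan is to exploit the fact that the only properties of ordinary differentiation used in Lemma~\ref{lemma1} are linearity and the product rule, both of which $D_G$ enjoys; in particular the Leibniz rule \eqref{Leibniz} holds for $D_G$. Thus the entire argument is purely algebraic and transfers verbatim once $\frac{\mathrm{d}}{\mathrm{d}x}$ is replaced by $D_G$. Concretely, I would set $q=\frac{u}{v}$, so that $u=qv$, and write $q_k:=D_G^k(q)$ for $0\leqslant k\leqslant n$.

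Applying \eqref{Leibniz} to $u=qv$ gives, for each $m=0,1,\ldots,n$, the identity $D_G^m(u)=\sum_{k=0}^m\binom{m}{k}D_G^k(q)\,D_G^{m-k}(v)=\sum_{k=0}^m\binom{m}{k}D_G^{m-k}(v)\,q_k$. Reading these $n+1$ relations as a linear system in the unknowns $q_0,\ldots,q_n$, the coefficient matrix $M$ has entries $M_{mk}=\binom{m}{k}D_G^{m-k}(v)$ for $k\leqslant m$ and $0$ otherwise; it is lower triangular with every diagonal entry equal to $v$, so $\det M=v^{n+1}$ as a formal identity. Solving for the single unknown $q_n=D_G^n(u/v)$ by Cramer's rule yields $q_n=\det(\widetilde M)/v^{n+1}$, where $\widetilde M$ is obtained from $M$ by replacing its last column (the $q_n$-column, namely $(0,\ldots,0,v)^{\top}$) by the right-hand side vector $(u,D_G(u),\ldots,D_G^n(u))^{\top}$.

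It then remains to match $\det(\widetilde M)$ with the stated determinant. Indexing the columns of $w_n$ from $1$ to $n+1$, the first column is exactly the right-hand side vector $(D_G^m(u))_m$, while for $c\geqslant 2$ the $c$-th column has $(m,c)$-entry $\binom{m}{c-2}D_G^{m-(c-2)}(v)$, i.e.\ it is column $k=c-2$ of $M$. Hence $w_n$ and $\widetilde M$ consist of exactly the same $n+1$ columns, the difference being that in $w_n$ the right-hand side column has been moved to the front while in $\widetilde M$ it sits last. This is a cyclic shift of one column past the other $n$, a permutation of sign $(-1)^n$, so $\det(\widetilde M)=(-1)^n w_n$ and therefore $D_G^n(u/v)=q_n=(-1)^n w_n/v^{n+1}$, as claimed.

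The only real bookkeeping hazard is the last step: one must keep the binomial-coefficient indexing straight when identifying the columns of $w_n$ with those of the coefficient matrix, and track the sign of the column cyclic shift correctly. As an alternative that avoids Cramer's rule entirely, I could induct on $n$: differentiating $D_G^{n-1}(u/v)=(-1)^{n-1}w_{n-1}/v^n$ once more via the quotient rule in the form $D_G(w/v^n)=\bigl(D_G(w)\,v-n\,w\,D_G(v)\bigr)/v^{n+1}$, and comparing with the Laplace expansion of $w_n$ along its last row supplied by the Hessenberg recursion \eqref{Hn-recu}; here the obstacle is checking that the cross terms produced by the recursion reproduce precisely the Leibniz coefficients $\binom{n}{r}$.
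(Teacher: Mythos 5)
Your argument is correct and is essentially the paper's own proof: both set $u=qv$, expand via the Leibniz rule \eqref{Leibniz} into a lower-triangular linear system with determinant $v^{n+1}$, apply Cramer's rule, and account for the sign $(-1)^n$ by moving the $u$-column past the other $n$ columns. The column bookkeeping and sign you flag as the only hazard are handled identically (and correctly) in the paper.
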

\begin{proof}
Set $w=\frac{u}{v}$. Then $u=wv$. By~\eqref{Leibniz}, we see that
$$D_G^n(u)=\sum_{k=0}^n\binom{n}{k}D_G^{n-k}(v)D_G^k(w),$$
when can be expressed as the following matrix equation:
$$\begin{pmatrix}
u\\
D_G(u)\\
D_G^2(u)\\
D_G^3(u)\\
\vdots\\
D_{G}^{n-1}(u)\\
D_G^n(u)
    \end{pmatrix}=\begin{pmatrix}
v&0&0&\cdots&0\\
D_G(v)&\binom{1}{1}v&0&\cdots&0\\
D_G^2(v)&\binom{2}{1}D_G(v)&\binom{2}{2}v&\cdots&0\\
D_G^3(v)&\binom{3}{1}D_G^2(v)&\binom{3}{2}D_G(v)&\cdots&0\\
\vdots&\vdots&\vdots&\ddots&\vdots\\
D_{G}^{n-1}(v)&\binom{n-1}{1}D_G^{n-2}(v)&\binom{n-1}{2}D_G^{n-3}(v)&\cdots&0\\
D_G^n(v)&\binom{n}{1}D_G^{n-1}(v)&\binom{n}{2}D_G^{n-2}(v)&\cdots&\binom{n}{n}v
    \end{pmatrix}\begin{pmatrix}
w\\
D_G(w)\\
D_G^2(w)\\
D_G^3(w)\\
\vdots\\
D_{G}^{n-1}(w)\\
D_G^n(w)
    \end{pmatrix}.$$
Note that the coefficient matrix is triangular with determinant $v^{n+1}$, by Cramer's rule, we get
$$D_G^n(w)=\frac{1}{v^{n+1}}\begin{vmatrix}
v&0&0&\cdots&0&u\\
D_G(v)&\binom{1}{1}v&0&\cdots&0&D_G(u)\\
D_G^2(v)&\binom{2}{1}D_G(v)&\binom{2}{2}v&\cdots&0&D_G^2(u)\\
D_G^3(v)&\binom{3}{1}D_G^2(v)&\binom{3}{2}D_G(v)&\cdots&0&D_G^3(u)\\
\vdots&\vdots&\vdots&\ddots&\vdots&\vdots\\
D_{G}^{n-1}(v)&\binom{n-1}{1}D_G^{n-2}(v)&\binom{n-1}{2}D_G^{n-3}(v)&\cdots&\binom{n-1}{n-1}v&D_G^{n-1}(u)\\
D_G^n(v)&\binom{n}{1}D_G^{n-1}(v)&\binom{n}{2}D_G^{n-2}(v)&\cdots&\binom{n}{n-1}D_G(v)&D_G^n(u)
    \end{vmatrix}.$$
Interchanging of adjacent columns such that the last column becomes the first column,
we must introduce a minus sign upon each swapping of a pair of adjacent columns, so
we get the sign $(-1)^n$, as desired. This completes the proof.
\end{proof}
In the next subsection, we give some illustrations of Lemma~\ref{lemma2}.
\subsection{Eulerian polynomials and Andr\'e polynomials}
\hspace*{\parindent}

Consider Proposition~\ref{grammar01}.
Let $G=\{a\rightarrow ab, b\rightarrow ab\}$. Note that $D_{G}^n(a)=D_{G}^n(\frac{u}{v})$, where
$u(a,b)=1$ and $v(a,b)={1}/{a}$. For $k\geqslant 1$, we have $$D_G^k\left(u\right)=0,~D_G^k\left(v\right)=-\frac{b}{a}(a-b)^{k-1}.$$
When $a=1$ and $b=x$, combining Lemma~\ref{lemma2} and Proposition~\ref{grammar01},
we see that
$A_n(x)=(-1)^nw_n$, where $w_n$ the
the following lower
Hessenberg determinant of order $n+1$:
 \begin{equation*}
\begin{vmatrix}
1&1&0&0&\cdots&0&0\\
0&-x&\binom{1}{1}&0&\cdots&0&0\\
0&-x(1-x)&-\binom{2}{1}x&\binom{2}{2}&\cdots&0&0\\
\vdots&\vdots&\vdots&\vdots&\ddots&\vdots&\vdots\\
0&-x(1-x)^{n-2}&-\binom{n-1}{1}x(1-x)^{n-3}&-\binom{n-1}{2}x(1-x)^{n-4}&\cdots&-\binom{n-1}{n-2}x&\binom{n-1}{n-1}\\
0&-x(1-x)^{n-1}&-\binom{n}{1}x(1-x)^{n-2}&-\binom{n}{2}x(1-x)^{n-3}&\cdots&-\binom{n}{n-2}x(1-x)&-\binom{n}{n-1}x
    \end{vmatrix}.
 \end{equation*}
Except for the first column, multiplying each of the other columns of $w_n$ by $(-1)$,
we get~\eqref{Anx-det}.
If we set $u(a,b)=b$ and $v(a,b)={1}/{a}$,
then for $n\geqslant 1$, we have
$$D_{G}^{n+1}(a)=D_{G}^n(ab)=D_{G}^n\left(\frac{u}{v}\right),~D_{G}^n(u)=D_{G}^n(b)=a^{n+1}A_n\left(\frac{b}{a}\right),~D_G^n\left(v\right)=-\frac{b}{a}(a-b)^{n-1}.$$
Using Lemma~\ref{lemma2} and Proposition~\ref{grammar01}, letting $a=1$ and $b=x$, we find that $A_{n+1}(x)$ is expressible as $(-1)^nw_n$, where $w_n$ is the following lower
Hessenberg determinant of order $n+1$:
 \begin{equation*}
\begin{vmatrix}
x&1&0&\cdots&0&0\\
A_1(x)&-x&1&\cdots&0&0\\
A_2(x)&-x(1-x)&-2x&\cdots&0&0\\
\vdots&\vdots&\vdots&&\ddots&\vdots\\
A_{n-1}(x)&-x(1-x)^{n-2}&-\binom{n-1}{1}x(1-x)^{n-3}&\cdots&-\binom{n-1}{n-2}x&1\\
A_n(x)&-x(1-x)^{n-1}&-\binom{n}{1}x(1-x)^{n-2}&\cdots&-\binom{n}{n-2}x(1-x)&-\binom{n}{n-1}x
    \end{vmatrix}.
  \end{equation*}
Except for the first column of $w_n$, multiplying each of the other columns by $(-1)$, we get the following result.
\begin{theorem}\label{Eulerian-det}
For any $n\geqslant 0$, the Eulerian polynomial $A_{n+1}(x)$ can be expressed as the following lower
Hessenberg determinant of order $n+1$:
 \begin{equation*}
A_{n+1}(x)=\begin{vmatrix}
x&-1&0&\cdots&0&0\\
A_1(x)&x&-1&\cdots&0&0\\
A_2(x)&x(1-x)&2x&\cdots&0&0\\
\vdots&\vdots&\vdots&&\ddots&\vdots\\
A_{n-1}(x)&x(1-x)^{n-2}&\binom{n-1}{1}x(1-x)^{n-3}&\cdots&\binom{n-1}{n-2}x&-1\\
A_n(x)&x(1-x)^{n-1}&\binom{n}{1}x(1-x)^{n-2}&\cdots&\binom{n}{n-2}x(1-x)&\binom{n}{n-1}x
    \end{vmatrix}.
  \end{equation*}
\end{theorem}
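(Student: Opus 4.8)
The plan is to realize $A_{n+1}(x)$ as the grammatical derivative $D_G^{n+1}(a)$ evaluated at $a=1,b=x$, and then to apply Lemma~\ref{lemma2} to the grammar $G=\{a\to ab,\ b\to ab\}$ of Proposition~\ref{grammar01}. The first move is to exhibit $D_G(a)=ab$ as a quotient: setting $u=b$ and $v=1/a$ gives $u/v=ab=D_G(a)$, so that
$$D_G^n\!\left(\frac{u}{v}\right)=D_G^n(ab)=D_G^{n+1}(a)=a^{n+2}A_{n+1}\!\left(\frac{b}{a}\right)$$
by Proposition~\ref{grammar01}. At $a=1,b=x$ the right-hand side is exactly $A_{n+1}(x)$, while $v^{n+1}=a^{-(n+1)}$ evaluates to $1$; hence Lemma~\ref{lemma2} will present $A_{n+1}(x)$ as $(-1)^n$ times a lower Hessenberg determinant $w_n$ assembled from the derivatives $D_G^k(u)$ and $D_G^k(v)$.

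Next I would compute those derivatives. Since the grammar is symmetric, $D_G(a)=D_G(b)=ab$, and therefore $D_G^k(b)=D_G^k(a)=a^{k+1}A_k(b/a)$ for all $k\geqslant 1$; at $a=1,b=x$ (together with $D_G^0(u)=b=x$) this produces the first column $x,A_1(x),A_2(x),\ldots,A_n(x)$. For $v=1/a$ the key is the pair of identities $D_G(a-b)=ab-ab=0$ and $D_G(b/a)=\frac{b}{a}(a-b)$, which say that $a-b$ is a grammatical constant; a one-line induction then yields
$$D_G^n\!\left(\frac{1}{a}\right)=-\frac{b}{a}(a-b)^{n-1}\qquad(n\geqslant 1),$$
equal to $-x(1-x)^{n-1}$ at $a=1,b=x$. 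Substituting into the matrix of Lemma~\ref{lemma2}, the $(i,j)$ entry off the first column is $\binom{i-1}{j-2}D_G^{\,i-j+1}(v)$, which specializes to $1$ on the superdiagonal $i=j-1$ and to $-\binom{i-1}{j-2}x(1-x)^{i-j}$ for $i\geqslant j$.

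The last step is sign bookkeeping: multiplying each of the $n$ columns other than the first by $-1$ scales the determinant by $(-1)^n$, which cancels the prefactor coming from Lemma~\ref{lemma2} and simultaneously turns every $-x(1-x)^{k}$ into $x(1-x)^{k}$ and every superdiagonal $1$ into $-1$, giving precisely the asserted matrix. I expect the main (and only mild) obstacle to be the closed-form evaluation of $D_G^n(1/a)$, where the induction hinges on the observation that $D_G(a-b)=0$ so that $D_G$ acts only on the factor $b/a$; the remaining care is simply to track the single global sign $(-1)^n$ through the column rescalings so that the first column is left untouched and the $(1,2)$ entry ends up as $-1$.
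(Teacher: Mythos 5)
Your proposal is correct and follows essentially the same route as the paper: the same grammar $G=\{a\to ab,\ b\to ab\}$ with $u=b$, $v=1/a$, the same evaluations $D_G^k(b)=a^{k+1}A_k(b/a)$ and $D_G^n(1/a)=-\tfrac{b}{a}(a-b)^{n-1}$ (hinging on $D_G(a-b)=0$), followed by Lemma~\ref{lemma2} and the same rescaling of all columns but the first by $-1$. No gaps.
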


\begin{example}
When $n=3$, one has
 \begin{equation*}
  A_{4}(x)=\begin{vmatrix}
x&-1&0&0\\
x&x&-1&0\\
x+x^2&x(1-x)&2x&-1\\
x+4x^2+x^3&x(1-x)^2&3x(1-x)&3x\\
    \end{vmatrix}=x+11x^2+11x^3+x^4.
  \end{equation*}
\end{example}

In the case of Theorem~\ref{Eulerian-det},
it follows from~\eqref{Hn-recu} that $\det H_{n+1}=A_{n+1}(x),~h_{n+1,1}=A_n(x)$, $h_{n+1,n+1}=\binom{n}{n-1}x=nx$, $\prod_{j=r}^{n}h_{j,j+1}=(-1)^{n-r+1}$ for $1\leqslant r\leqslant n$, and
$$h_{n+1,r}=\binom{n}{r-2}x(1-x)^{n-r+1},$$
where $2\leqslant r\leqslant n$. So we get the following result.
\begin{corollary}
For $n\geqslant 1$, we have $A_1(x)=x$ and
\begin{align*}
A_{n+1}(x)&=(1+nx)A_n(x)+x\sum_{r=2}^n\binom{n}{r-2}(1-x)^{n-r+1}A_{r-1}(x)\\
&=A_n(x)+x\sum_{r=2}^{n+1}\binom{n}{r-2}(1-x)^{n-r+1}A_{r-1}(x).
\end{align*}
\end{corollary}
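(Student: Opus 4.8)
The plan is to apply the Hessenberg determinant recursion \eqref{Hn-recu} of Cahill et al. directly to the matrix $H_{n+1}$ appearing in Theorem~\ref{Eulerian-det}, whose determinant is $A_{n+1}(x)$. All the needed entries are visible in that displayed matrix, so the argument is essentially a substitution followed by bookkeeping.

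First I would record the entries of $H_{n+1}$ in closed form. Its first column is $h_{1,1}=x$ and $h_{I,1}=A_{I-1}(x)$ for $2\leqslant I\leqslant n+1$, its superdiagonal entries are $h_{I,I+1}=-1$, and for $2\leqslant j\leqslant I$ one has $h_{I,j}=\binom{I-1}{j-2}x(1-x)^{I-j}$. The decisive observation is that, apart from the first column, these entries depend only on the indices $I$ and $j$, and not on the overall size $n+1$. Consequently the leading principal submatrix of order $k$ of $H_{n+1}$ is exactly the order-$k$ matrix of Theorem~\ref{Eulerian-det}; writing $H_k$ for this submatrix and setting $H_0=1$, this gives $\det H_k=A_k(x)$ for every $0\leqslant k\leqslant n+1$, where $A_0(x)=1$. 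This self-similarity is what makes the smaller determinants produced by \eqref{Hn-recu} again Eulerian polynomials, and I expect it to be the only point genuinely requiring care.

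With this in hand the remaining steps are routine. From the last row I read off $h_{n+1,n+1}=\binom{n}{n-1}x=nx$, $h_{n+1,1}=A_n(x)$, and $h_{n+1,r}=\binom{n}{r-2}x(1-x)^{n-r+1}$ for $2\leqslant r\leqslant n$; since every superdiagonal entry equals $-1$, the product $\prod_{j=r}^{n}h_{j,j+1}=(-1)^{n-r+1}$. Substituting into \eqref{Hn-recu}, the two sign factors combine as $(-1)^{n+1-r}(-1)^{n-r+1}=1$, so every summand is positive, which yields
\[
A_{n+1}(x)=nx\,A_n(x)+\sum_{r=1}^{n}h_{n+1,r}A_{r-1}(x).
\]

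Finally I would separate the $r=1$ term, which contributes $h_{n+1,1}A_0(x)=A_n(x)$, from the terms $r=2,\ldots,n$; collecting $nx\,A_n(x)+A_n(x)=(1+nx)A_n(x)$ then gives the first displayed recurrence. For the second form, I would note that the $r=n+1$ instance of the summand $x\binom{n}{r-2}(1-x)^{n-r+1}A_{r-1}(x)$ equals $x\binom{n}{n-1}A_n(x)=nx\,A_n(x)$, so absorbing the term $nx\,A_n(x)$ into the sum extends its range to $r=n+1$ and leaves only $A_n(x)$ outside, which is exactly the second identity. The base case $A_1(x)=x$ is immediate from the order-one determinant $\det[\,x\,]=x$.
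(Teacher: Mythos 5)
Your proposal is correct and follows essentially the same route as the paper: both apply the Cahill et al. recursion \eqref{Hn-recu} to the Hessenberg matrix of Theorem~\ref{Eulerian-det}, read off $h_{n+1,n+1}=nx$, $h_{n+1,1}=A_n(x)$, $h_{n+1,r}=\binom{n}{r-2}x(1-x)^{n-r+1}$, and note that the superdiagonal product cancels the sign $(-1)^{n+1-r}$. Your explicit check that the leading principal submatrices reproduce the smaller Eulerian determinants (so that $\det H_{r-1}=A_{r-1}(x)$) is a point the paper leaves implicit, but it is the same argument.
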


As another illustration of Lemma~\ref{lemma2}, we shall consider the famous Andr\'e polynomials.
An increasing tree on $[n]$ is a rooted tree with vertex set $\{0,1,2,\ldots,n\}$ in which the labels
of the vertices are increasing along any path from the root $0$. The degree of a vertex is meant to be the number of its children.
A {\it 0-1-2 increasing tree} is an increasing tree in which the degree of any vertex is
at most two.  Given a 0-1-2 increasing tree $T$, let $\ell(T)$ denote the number
of leaves of $T$, and $u(T)$ denote the number of vertices of $T$ with degree $1$. The
{\it Andr\'e polynomials} are defined by
$$E_n(x,y)=\sum_{T\in \men}x^{\ell(T)}y^{u(T)},$$
where $\men$ is the set of 0-1-2 increasing trees on $[n-1]$.
Here are the first few values:
\begin{equation*}
E_1(x,y)=x,~
E_2(x,y)=xy,~
E_3(x,y)=xy^2+x^2,~
E_4(x,y)=xy^3+4x^2y.
\end{equation*}

Using a grammatical labeling of 0-1-2 increasing trees,
Dumont~\cite{Dumont96} discovered that
\begin{equation}\label{DGNy}
D_G^n(y)=E_n(x,y),
\end{equation}
where $G=\{x\rightarrow xy,~y\rightarrow x\}$.
Let $E(x,y;z)=\sum_{n=0}^\infty E_n(x,y)\frac{z^n}{n!}$.
By solving a differential equation, Foata-Sch\"utzenberger~\cite{Foata73} found that
\begin{equation}\label{foata}
\begin{small}
E(x,y;z)=\frac{x\sqrt{2x-y^2}+y(2x-y^2)\sin(z\sqrt{2x-y^2})-(x-y^2)\sqrt{2x-y^2}\cos(z\sqrt{2x-y^2})}{(x-y^2)\sin(z\sqrt{2x-y^2})+y\sqrt{2x-y^2}\cos(z\sqrt{2x-y^2})}.
\end{small}
\end{equation}

We say that $\pi\in\msn$ is {\it alternating}, if $\pi(1)>\pi(2)<\pi(3)>\cdots \pi(n)$, i.e., $\pi(2i-1)>\pi(2i)$ and $\pi(2i)<\pi(2i+1)$ for all $i$.
The classical {\it Euler number} $E_n$ counts
alternating permutations in $\msn$, see~\cite[A000111]{Sloane}. In particular, $E_3=\#\{213,312\}=2$.
Dumont~\cite{Dumont96} noted that
\begin{equation}\label{En}
E(1,1;z)=\tan z+ \sec z=\sum_{n=0}^\infty E_n\frac{z^n}{n!}.
\end{equation}
Thus $E_n=\#\men=E_n(1,1)$.
Following~\cite[Theorem~11]{Ma132}, a close connection between Andr\'e polynomials and the type $A$ alternating run polynomials is given as follows:
$$R_n(x)=2(1+x)^{n-1}E_n\left(\frac{x}{1+x},1\right)~\text{for $n\geqslant 2$}.$$

We now give a dual formula of~\eqref{foata}.
\begin{theorem}\label{Andre}
For any $n\geqslant 1$, we define
$${e}_n(x,y)=\left\{
           \begin{array}{ll}
             (-1)^{k-1}y(2x-y^2)^{k-1}, & \hbox{if $n=2k-1$;} \\
             (-1)^{k-1}{(x-y^2)}(2x-y^2)^{k-1}, & \hbox{if $n=2k$.}
           \end{array}
         \right.
$$
In particular, $e_1(x,y)=y,~e_2(x,y)=x-y^2$ and $e_3(x,y)=-y(2x-y^2)$.
The And\'re polynomial $E_{n+1}(x,y)$ can be expressed as the Hessenberg determinant of order $n$:
 \begin{equation*}
 \begin{vmatrix}
x{e}_1(x,y)&-\binom{1}{1}&0&\cdots&0&0\\
x{e}_2(x,y)&\binom{2}{1}{e}_1(x,y)&-\binom{2}{2}&\cdots&0&0\\
\vdots&\vdots&\vdots&\ddots&\vdots&\vdots\\
x{e}_{n-1}(x,y)&\binom{n-1}{1}{e}_{n-2}(x,y)&\binom{n-1}{2}{e}_{n-3}(x,y)&\cdots&\binom{n-1}{n-2}{e}_1(x,y)&-\binom{n-1}{n-1}\\
x{e}_n(x,y)&\binom{n}{1}{e}_{n-1}(x,y)&\binom{n}{2}{e}_{n-2}(x,y)&\cdots&\binom{n}{n-2}{e}_2(x,y)&\binom{n}{n-1}{e}_1(x,y)
    \end{vmatrix}.
  \end{equation*}
\end{theorem}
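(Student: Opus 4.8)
The plan is to read the determinant off from the grammatical calculus of Lemma~\ref{lemma2}, applied to the grammar $G=\{x\rightarrow xy,~y\rightarrow x\}$. By Dumont's identity~\eqref{DGNy} we have $D_G^n(y)=E_n(x,y)$, and since $D_G(y)=x$ this gives
\[
E_{n+1}(x,y)=D_G^{n+1}(y)=D_G^{n}(x).
\]
So it suffices to compute $D_G^{n}(x)$ by Lemma~\ref{lemma2}, which requires writing $x$ as a quotient $u/v$ of formal functions. The choice that forces the resulting Hessenberg determinant down to order $n$ (rather than $n+1$) is $u=1$ and $v=1/x$: then $u/v=x$, and the first column $\big(D_G^i(1)\big)_i=(1,0,0,\ldots,0)^{T}$ of the matrix in Lemma~\ref{lemma2} is trivial, so it can be removed by a single cofactor expansion. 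This mirrors the derivation of Theorem~\ref{Eulerian-det}, the only new feature being the collapse caused by $u=1$.

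The technical heart of the argument is the auxiliary identity
\[
D_G^{k}\!\left(\tfrac1x\right)=-\frac{e_k(x,y)}{x}\quad(k\geqslant 1),\qquad D_G^{0}\!\left(\tfrac1x\right)=\tfrac1x .
\]
The engine is the invariant $D_G(2x-y^2)=2xy-2y\cdot x=0$, so $c:=2x-y^2$ behaves as a constant for $D_G$. Setting $g_k:=x\,D_G^{k}(1/x)$ and using $D_G(1/x)=-y/x$ together with the product rule, I would derive the recursion $g_{k+1}=D_G(g_k)-y\,g_k$ with $g_0=1$; the claimed formula is then equivalent to $g_k=-e_k$, i.e. to $e_{k+1}=D_G(e_k)-y\,e_k$ with $e_0=-1$. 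I would verify this last recursion directly from the parity-dependent closed form of $e_k$, using $D_G(c)=0$, $D_G(y)=x$ and $D_G(x-y^2)=-xy$ to pass between the odd case $e_{2k-1}=(-1)^{k-1}yc^{k-1}$ and the even case $e_{2k}=(-1)^{k-1}(x-y^2)c^{k-1}$. This is the step I expect to be the main obstacle, since it is the one place where the exact shape of $e_k$ — the alternating powers of $2x-y^2$ — must be matched against the grammar, and both parities have to produce the correct sign.

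Granting the auxiliary identity, Lemma~\ref{lemma2} with exponent $n$ yields
\[
E_{n+1}(x,y)=D_G^{n}(x)=(-1)^{n}\,\frac{w_n}{v^{\,n+1}}=(-1)^{n}x^{\,n+1}w_n,
\]
where $w_n$ is the order-$(n+1)$ Hessenberg determinant of Lemma~\ref{lemma2}. Expanding $w_n$ along its first column $(1,0,\ldots,0)^{T}$ reduces it to the $(1,1)$-minor, an order-$n$ determinant whose $(p,q)$-entry for $1\leqslant p,q\leqslant n$ is $\binom{p}{q-1}D_G^{p-q+1}(1/x)$. It is precisely this reindexing of the deleted matrix that converts the binomial $\binom{r-1}{c-2}$ occurring in Lemma~\ref{lemma2} into the $\binom{p}{q-1}$ appearing in the statement; the superdiagonal entries become $D_G^{0}(1/x)=1/x$, the strictly lower entries become $-\tfrac1x\binom{p}{q-1}e_{p-q+1}$, and the first column becomes $-e_p/x$.

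It then remains to clear denominators, which is bookkeeping once the identity above is in place. Pulling the common factor $1/x$ out of each of the $n$ columns contributes $x^{-n}$; negating each of the $n$ rows contributes $(-1)^{n}$ and simultaneously turns the superdiagonal into $-1$ and the lower entries into $+\binom{p}{q-1}e_{p-q+1}$; and rescaling the first column from $e_p$ to $xe_p$ contributes one further factor $x$, producing exactly the matrix in the statement. The accumulated scalar $(-1)^{n}\cdot x^{\,n+1}\cdot x^{-n}\cdot(-1)^{n}\cdot x^{-1}=1$, so $E_{n+1}(x,y)$ equals the displayed determinant. As a sanity check one can take $n=2$: the minor evaluates to $(x+y^2)/x^2$, and the formula returns $(-1)^{2}x^{3}\cdot (x+y^2)/x^2=x^2+xy^2=E_3(x,y)$, in agreement with the direct expansion of the $2\times 2$ determinant.
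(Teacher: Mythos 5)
Your proposal is correct and follows essentially the same route as the paper: both apply Lemma~\ref{lemma2} to the grammar $G=\{x\rightarrow xy,\ y\rightarrow x\}$ with $u=1$, $v=1/x$, establish the closed form $D_G^{k}(1/x)=-e_k(x,y)/x$ by induction using $D_G(2x-y^2)=0$, and then expand along the trivial first column and rescale rows and columns to reach the stated order-$n$ determinant. Your recursion $g_{k+1}=D_G(g_k)-y\,g_k$ is just a slightly more systematic packaging of the induction the paper leaves as ``routine,'' and your scalar bookkeeping $(-1)^n x^{n+1}\cdot x^{-n}\cdot(-1)^n\cdot x^{-1}=1$ matches the paper's column multiplications by $-x^2$ and $-x$.
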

\begin{proof}
Let $G=\{x\rightarrow xy,~y\rightarrow x\}$. It follows from~\eqref{DGNy} that
$$E_{n+1}(x,y)=D_G^{n+1}(y)=D_G^n(x)=D_G^n\left(\frac{1}{\frac{1}{x}}\right).$$
Put $u=1$ and $v=\frac{1}{x}$.
Then $D_G^k(u)=0$ for $k\geqslant 1$. Note that
$$D_G(v)=D_G\left(\frac{1}{x}\right)=-\frac{y}{x},~D_G^2(v)=D_G\left(-\frac{y}{x}\right)=\frac{y^2-x}{x},$$
$$D_G^3(v)=D_G^3\left(\frac{y^2-x}{x}\right)=\frac{y}{x}(2x-y^2).$$
Since $D_G(2x-y^2)=0$ and $D_G\left(\frac{y}{x}\right)=\frac{x-y^2}{x}$, we find that
$$D_G^4(v)=\frac{x-y^2}{x}(2x-y^2),~D_G^5(v)=-\frac{y}{x}(2x-y^2)^2,$$
$$D_G^6(v)=-\frac{x-y^2}{x}(2x-y^2)^2,~D_G^7(v)=\frac{y}{x}(2x-y^2)^3.$$
By induction, it is easy to verify that for $k\geqslant 1$, we have
$$D_G^{2k-1}(v)=(-1)^k\frac{y}{x}(2x-y^2)^{k-1},~D_G^{2k}(v)=(-1)^k\frac{x-y^2}{x}(2x-y^2)^{k-1}.$$
We define
$$\widetilde{e}_n(x,y)=\left\{
           \begin{array}{ll}
             (-1)^k\frac{y}{x}(2x-y^2)^{k-1}, & \hbox{if $n=2k-1$;} \\
             (-1)^k\frac{x-y^2}{x}(2x-y^2)^{k-1}, & \hbox{if $n=2k$.}
           \end{array}
         \right.
$$
Using Lemma~\ref{lemma2}, we obtain that $E_{n+1}(x,y)=(-1)^nx^{n+1}w_n$, where $w_n$ is the lower Hessenberg determinant of order $n+1$:
 \begin{equation*}
 \begin{vmatrix}
1&\frac{1}{x}&0&0&\cdots&0&0\\
0&-\frac{y}{x}&\binom{1}{1}\frac{1}{x}&0&\cdots&0&0\\
0&\widetilde{e}_2(x,y)&-\binom{2}{1}\frac{y}{x}&\binom{2}{2}\frac{1}{x}&\cdots&0&0\\
\vdots&\vdots&\vdots&\vdots&\ddots&\vdots&\vdots\\
0&\widetilde{e}_{n-1}(x,y)&\binom{n-1}{1}\widetilde{e}_{n-2}(x,y)&\binom{n-1}{2}\widetilde{e}_{n-3}(x,y)&\cdots&-\binom{n-1}{n-2}\frac{y}{x}&\binom{n-1}{n-1}\frac{1}{x}\\
0&\widetilde{e}_n(x,y)&\binom{n}{1}\widetilde{e}_{n-1}(x,y)&\binom{n}{2}\widetilde{e}_{n-2}(x,y)&\cdots&\binom{n}{n-2}\widetilde{e}_2(x,y)&-\binom{n}{n-1}\frac{y}{x}
    \end{vmatrix}.
  \end{equation*}
Expanding $w_n$ along the first column, multiplying the first column of the resulting determinant by $-x^2$ and multiplying each of the other columns by $-x$, we get
 \begin{equation*}
 \begin{vmatrix}
xy&-\binom{1}{1}&0&\cdots&0&0\\
-x^2\widetilde{e}_2(x,y)&\binom{2}{1}y&-\binom{2}{2}&\cdots&0&0\\
\vdots&\vdots&\vdots&\ddots&\vdots&\vdots\\
-x^2\widetilde{e}_{n-1}(x,y)&-x\binom{n-1}{1}\widetilde{e}_{n-2}(x,y)&-x\binom{n-1}{2}\widetilde{e}_{n-3}(x,y)&\cdots&\binom{n-1}{n-2}y&-\binom{n-1}{n-1}\\
-x^2\widetilde{e}_n(x,y)&-x\binom{n}{1}\widetilde{e}_{n-1}(x,y)&-x\binom{n}{2}\widetilde{e}_{n-2}(x,y)&\cdots&-x\binom{n}{n-2}\widetilde{e}_2(x,y)&\binom{n}{n-1}y
    \end{vmatrix},
  \end{equation*}
which yields the desired result.
\end{proof}

When $x=y=1$, combining Theorem~\ref{Andre} and~\eqref{Hn-recu}, we arrive at the following result.
\begin{corollary}
For any $n\geqslant 1$, let $a_{2n-1}=(-1)^{n-1}$ and $a_{2n}=0$.
The Euler number $E_{n+1}$ can be expressed as the Hessenberg determinant of order $n$:
 \begin{equation*}
 \begin{vmatrix}
a_1&-\binom{1}{1}&0&\cdots&0&0\\
a_2&\binom{2}{1}a_1&-\binom{2}{2}&\cdots&0&0\\
a_3&\binom{3}{1}a_2&\binom{3}{2}a_1&\cdots&0&0\\
\vdots&\vdots&\vdots&\ddots&\vdots&\vdots\\
a_{n-1}&\binom{n-1}{1}{a}_{n-2}&\binom{n-1}{2}{a}_{n-3}&\cdots&\binom{n-1}{n-2}a_1&-\binom{n-1}{n-1}\\
a_n&\binom{n}{1}{a}_{n-1}&\binom{n}{2}{a}_{n-2}&\cdots&\binom{n}{n-2}a_2&\binom{n}{n-1}a_1
    \end{vmatrix}.
  \end{equation*}
Moreover, for $n\geqslant 1$, the Euler numbers $E_n$ satisfy the recurrence relation
$$E_{n+1}=\sum_{r=1}^n\binom{n}{r-1}a_{n-r+1}E_r,$$
with the initial values $E_1=E_2=1$ and $E_3=2$.
\end{corollary}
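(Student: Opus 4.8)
The plan is to derive both assertions by specializing Theorem~\ref{Andre} at $x=y=1$ and then feeding the resulting Hessenberg determinant into the recursion~\eqref{Hn-recu}. First I would evaluate the entries $e_n(x,y)$ at $x=y=1$. Since $2x-y^2=1$ and $x-y^2=0$ at this point, the definition gives $e_{2k-1}(1,1)=(-1)^{k-1}(2x-y^2)^{k-1}|_{x=y=1}=(-1)^{k-1}=a_{2k-1}$ and $e_{2k}(1,1)=(-1)^{k-1}(x-y^2)(2x-y^2)^{k-1}|_{x=y=1}=0=a_{2k}$; that is, $e_n(1,1)=a_n$ for every $n\geqslant 1$. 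Because $x=1$ as well, every first-column entry $x\,e_n(x,y)$ of the determinant in Theorem~\ref{Andre} reduces to $a_n$, and $E_{n+1}(1,1)=E_{n+1}$. This already yields the claimed determinantal representation of $E_{n+1}$.

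For the recurrence I would write $H_m$ for the order-$m$ Hessenberg matrix in the statement, so that $\det H_m=E_{m+1}$ under the convention $\det H_0=E_1=1$. Reading off the entries, one has $h_{i,j}=\binom{i}{j-1}a_{i-j+1}$ for $1\leqslant j\leqslant i$ (the case $j=1$ giving $h_{i,1}=a_i$), the superdiagonal entries $h_{i,i+1}=-\binom{i}{i}=-1$, and $h_{i,j}=0$ for $j>i+1$; crucially these formulas do not involve the ambient order, so the matrices are nested and~\eqref{Hn-recu} applies verbatim. The key simplification is the superdiagonal product $\prod_{j=r}^{m}h_{j,j+1}=(-1)^{m-r+1}$, whose sign exactly cancels the factor $(-1)^{m+1-r}$ appearing in~\eqref{Hn-recu}. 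The recursion therefore collapses to
\[
\det H_{m+1}=h_{m+1,m+1}\det H_m+\sum_{r=1}^{m}h_{m+1,r}\det H_{r-1}=\sum_{r=1}^{m+1}h_{m+1,r}\det H_{r-1},
\]
where the last step absorbs the diagonal term $h_{m+1,m+1}\det H_m$ as the $r=m+1$ summand (note $h_{m+1,m+1}=\binom{m+1}{m}a_1$ and $a_1=1$).

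Substituting $h_{m+1,r}=\binom{m+1}{r-1}a_{m-r+2}$ and $\det H_{r-1}=E_r$, and then writing $n=m+1$, produces $E_{n+1}=\sum_{r=1}^{n}\binom{n}{r-1}a_{n-r+1}E_r$, which is the asserted recurrence; the initial values $E_1=E_2=1$ and $E_3=2$ follow by direct evaluation of the $1\times 1$ and $2\times 2$ determinants, the latter giving $E_3=2a_1^2+a_2=2$. I do not expect a genuine obstacle: the only points demanding care are the bookkeeping of the two sign factors whose product is $(-1)^{2(m-r+1)}=1$, and the observation that the nested structure of the $H_m$ legitimizes the identity $\det H_{r-1}=E_r$ for all $r$ simultaneously, so that the single recursion~\eqref{Hn-recu} delivers the entire family of identities at once.
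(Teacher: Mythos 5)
Your proposal is correct and follows essentially the same route as the paper, which obtains the corollary precisely by setting $x=y=1$ in Theorem~\ref{Andre} (so that $e_n(1,1)=a_n$) and then feeding the resulting nested Hessenberg determinants into the recursion~\eqref{Hn-recu}; your sign bookkeeping $(-1)^{m+1-r}\cdot(-1)^{m-r+1}=1$ and the absorption of the diagonal term as the $r=m+1$ summand are exactly the intended simplifications.
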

\subsection{Alternating run and up-down run polynomials}
\hspace*{\parindent}

For each $\pi=\pi(1)\pi(2)\cdots\pi(n)\in\msn$, a {\it run} of $\pi$ is a maximal segment (a subsequence
consisting of consecutive elements) that is either increasing or decreasing.
For example, the alternating runs of $\pi=82315467$ are $82,23,31,15,54,467$.
Let $\run(\pi)$ denote the number of alternating runs of $\pi$.
The {\it type $A$ alternating run polynomials} are defined by
$$R_n(x)=\sum_{\pi\in\msn}x^{\run(\pi)}=\sum_{k=0}^{n-1}R(n,k)x^k.$$
Andr\'e~\cite{Andre84} found that
\begin{equation}\label{Rnk-recurrence}
R(n,k)=kR(n-1,k)+2R(n-1,k-1)+(n-k)R(n-1,k-2)
\end{equation}
for $n>k\geqslant 1$, where $R(1,0)=1$ and $R(1,k)=0$ for $k\geqslant 1$.

Using column generating functions (see~\cite[Section~4]{CW08}), Canfield-Wilf
obtained that
$$R(n,k)=\frac{1}{2^{k-2}}k^n-\frac{1}{2^{k-4}}(k-1)^n+\psi_2(n,k)(k-2)^n+\cdots+\psi_{k-1}(n,k)$$
for $n\geqslant 2$, in which each $\psi_i(n,k)$ is a polynomial in $n$ whose degree in $n$ is $\lrf{i/2}$.
By solving partial differential equations, Stanley~\cite{Sta08} discovered that
\begin{equation*}\label{Rnk-Stan}
R(n,k)=\sum_{i=0}^k\frac{1}{2^{i-1}}(-1)^{k-i}z_{k-i}\sum_{\substack{r+2m\leqslant i\\r\equiv i\bmod 2}}(-2)^m\binom{i-m}{(i+r)/2}\binom{n}{m}r^n,
\end{equation*}
where $z_0=2$ and $z_n=4$ for $n\geqslant 1$. Ma~\cite{Ma12} found another explicit formula by
expressing $R_n(x)$ in terms of the derivative polynomials of tangent function.
The reader is referred to~\cite{Ma20,Zhuang16} for the recent work on alternating run polynomials.

An {\it alternating subsequence} of $\pi=\pi(1)\pi(2)\cdots\pi(n)$ is a subsequence $\pi({i_1})\cdots \pi({i_k})$ satisfying
$$\pi({i_1})>\pi({i_2})<\pi({i_3})>\cdots \pi({i_k}),$$
where $1\leqslant i_1<i_2<\cdots<i_k\leqslant n$.
Denote by $\as(\pi)$ the length of the longest alternating subsequence of $\pi$.
We also define an {\it up-down run} of $\pi$ to be a run of $\pi$ endowed with a $0$ in the front.
It should be noted that
$\as(\pi)$ equals the number of up-down runs of $\pi$. For example, $$\as(82315467)=\run(082315467)=7.$$
The {\it up-down run polynomials} are defined by
$$S_n(x)=\sum_{\pi\in\msn}x^{\as(\pi)}=\sum_{k=1}^nS(n,k)x^k.$$
Let $S(x;z)=\sum_{n=0}^{\infty}S_n(x)\frac{z^n}{n!}$.
Remarkably, Stanley~\cite[Theorem~2.3]{Sta08} obtained the following closed-form formula:
\begin{equation}\label{Stanley}
S(x;z)=(1-x)\frac{1+\rho+2xe^{\rho z}+(1-\rho)e^{2\rho z}}{1+\rho-x^2+(1-\rho-x^2)e^{2\rho z}},
\end{equation}
where $\rho=\sqrt{1-x^2}$. When $n\geqslant 2$, B\'ona~\cite[Section~1.3.2]{Bona12} obtained a remarkable identity:
\begin{equation}\label{SnxRnx}
S_n(x)=\frac{1}{2}(1+x)R_n(x).
\end{equation}
Ma~\cite[eq.~(14)]{Ma132} found that
\begin{equation*}\label{convolution2}
\sum_{n=0}^\infty \frac{z^n}{n!}\sum_{k=0}^nR(n+1,k)x^{n-k}=\left(\sum_{n=0}^\infty \frac{z^n}{n!}\sum_{k=0}^nS(n,k)x^{n-k}\right)^2.
\end{equation*}

Let $\mbn$ be the hyperoctahedral group of rank $n$. Elements $\pi$ of $\mbn$ are signed permutations of the set $\pm[n]$ such that $\pi(-i)=-\pi(i)$ for all $i$, where $\pm[n]=\{\pm1,\pm2,\ldots,\pm n\}$. In this paper, we always identify a signed permutation $\pi=\pi(1)\cdots\pi(n)$ with the word $\pi(0)\pi(1)\cdots\pi(n)$, where $\pi(0)=0$.
A {\it run} of a signed permutation $\pi$ is defined as a maximal consecutive subword that is monotonic in the order:
$\cdots<\overline{2}<\overline{1}<0<1<2<\cdots$, where we adopt the
standard convention that $\overline{i}$ is being used for $-i$.
The {\it up signed permutations} are defined as signed permutations with $\pi(1)> 0$.
Let $T(n,k)$ denote the number of up signed permutations in $\mbn$ with $k$ alternating runs.
The {\it type $B$ alternating run polynomials} over up signed permutations are defined by
$$T_n(x)=\sum_{k=1}^nT(n,k)x^k.$$
Let $T(x;z)=1+\sum_{n=1}^\infty T_n(x)z^n/n!$.
By the method of characteristic, Chow-Ma~\cite[Theorem~12]{Chow14} found that
\begin{equation}\label{ChowMa}
T(x,z)=\frac{1}{1+x}+\frac{x\sqrt{x-1}}{(1+x)\sqrt{x-\cos(2z\sqrt{x^2-1})-\sqrt{x^2-1}\sin(2z\sqrt{x^2-1})}}.
\end{equation}

The polynomials $R_n(x),S_n(x)$ and $T_n(x)$ satisfy the following recursions
\begin{equation*}
\begin{split}
R_{n+2}(x)&=x(nx+2)R_{n+1}(x)+x\left(1-x^2\right)\frac{\mathrm{d}}{\mathrm{d}x}R_{n+1}(x),\\
S_{n+1}(x)&=x(nx+1)S_{n}(x)+x\left(1-x^2\right)\frac{\mathrm{d}}{\mathrm{d}x}S_{n}(x),\\
T_{n+1}(x)&=(2nx^2+3x-1)T_{n}(x)+2x\left(1-x^2\right)\frac{\mathrm{d}}{\mathrm{d}x}T_{n}(x),
\end{split}
\end{equation*}
with $R_0(x)=R_{1}(x)=S_0(x)=1$ and $T_{1}(x)=x$, see~\cite{Chow14,Hwang20,Zhao,Zhuang16}.
The first few polynomials are given as follows (see~\cite[A059427,~A186370]{Sloane} and~\cite{Ma132,Zhuang16}):
$$R_2(x)=2x,~R_3(x)=2x+4x^2,~R_4(x)=2x+12x^2+10x^3;$$
$$S_1(x)=x,~S_2(x)=x+x^2,~S_3(x)=x+3x^2+2x^3,~S_4(x)=x+7x^2+11x^3+5x^4;$$
$$T_2(x)=x+3x^2,~T_3(x)=x+12x^2+11x^3,~T_4(x)=x+39x^2+95x^3+57x^4.$$

The generating functions~\eqref{CarlitzGF},~\eqref{Stanley} and~\eqref{ChowMa} are all seem complicated.
In the next section, we shall deduce the determinantal representations of $R_n(x),S_n(x)$ and $T_n(x)$.
\section{Up-down run and alternating run polynomials}\label{section3}
\subsection{Up-down run and alternating run polynomials over the symmetric group}
\hspace*{\parindent}

As given by~\eqref{SnxRnx}, the polynomial $R_n(x)$ is closely related to $S_n(x)$, which can also be illustrated by the following grammatical descriptions.
\begin{lemma}[\cite{Ma132}]\label{lemma3}
If $G=\{a\rightarrow ab, b\rightarrow bc, c\rightarrow b^2\}$,
then for $n\geqslant 0$, we have
\begin{equation}\label{derivapoly-4}
D_G^n(a)=ac^nS_n\left(\frac{b}{c}\right),~
D_G^{n}(a^2)=a^2c^nR_{n+1}\left(\frac{b}{c}\right).
\end{equation}
\end{lemma}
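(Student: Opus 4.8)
The plan is to establish both identities simultaneously by induction on $n$, the engine being the two recursions for $S_n(x)$ and $R_{n+1}(x)$ recorded in the preliminaries together with the fact that $D_G$ is a derivation. For the base case $n=0$, note that $D_G^0(a)=a=ac^0S_0(b/c)$ since $S_0(x)=1$, and $D_G^0(a^2)=a^2=a^2c^0R_1(b/c)$ since $R_1(x)=1$; both hold with $c^0=1$.

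The key preliminary computation is to differentiate the ``variable'' $x:=b/c$ under $G$. Since $D_G(a)=ab$, $D_G(b)=bc$ and $D_G(c)=b^2$, the quotient rule gives
\begin{equation*}
D_G\!\left(\frac{b}{c}\right)=\frac{(bc)\,c-b\,(b^2)}{c^2}=\frac{b(c^2-b^2)}{c^2}=c\,x(1-x^2).
\end{equation*}
Because $D_G$ is a derivation that annihilates the integer coefficients of the polynomials $S_n$ and $R_{n+1}$, it obeys the chain rule $D_G\big(P(b/c)\big)=P'(b/c)\,D_G(b/c)$ for any such $P$; I would record this first so that the inductive step becomes purely formal. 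I also note $D_G(a^2)=2a^2b$ and $D_G(c^n)=n\,b^2c^{n-1}$, together with the reductions $bc^n=c^{n+1}x$ and $b^2c^{n-1}=c^{n+1}x^2$.

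For the inductive step on the first identity, I apply $D_G$ to the hypothesis $D_G^n(a)=ac^nS_n(x)$ via the Leibniz rule, substitute the data above, and factor out $ac^{n+1}$:
\begin{align*}
D_G^{n+1}(a)&=D_G\!\left(ac^nS_n(x)\right)\\
&=ab\,c^nS_n(x)+a\,n b^2c^{n-1}S_n(x)+ac^nS_n'(x)\,D_G(x)\\
&=ac^{n+1}\!\left[xS_n(x)+nx^2S_n(x)+x(1-x^2)S_n'(x)\right]\\
&=ac^{n+1}S_{n+1}(x),
\end{align*}
the last equality being exactly the recursion $S_{n+1}(x)=x(nx+1)S_n(x)+x(1-x^2)\frac{\mathrm{d}}{\mathrm{d}x}S_n(x)$. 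The second identity is identical in shape: applying $D_G$ to $D_G^n(a^2)=a^2c^nR_{n+1}(x)$ and using $D_G(a^2)=2a^2b$ yields
\begin{align*}
D_G^{n+1}(a^2)&=2a^2b\,c^nR_{n+1}(x)+a^2n b^2c^{n-1}R_{n+1}(x)+a^2c^nR_{n+1}'(x)\,D_G(x)\\
&=a^2c^{n+1}\!\left[2xR_{n+1}(x)+nx^2R_{n+1}(x)+x(1-x^2)R_{n+1}'(x)\right]\\
&=a^2c^{n+1}R_{n+2}(x),
\end{align*}
which matches the target formula with $n$ replaced by $n+1$.

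I do not expect a genuine obstacle here; the content is entirely in matching the bracketed expressions to the stated recursions. The one point deserving care is the origin of the leading coefficient in each bracket: the term $xS_n(x)$ (giving the ``$+1$'' in $x(nx+1)$) comes from $D_G(a)=ab$ contributing a single factor $b$, whereas $2xR_{n+1}(x)$ (giving the ``$+2$'' in $x(nx+2)$) comes from $D_G(a^2)=2a^2b$ contributing two; the common term $nx^2(\cdots)$ comes from $D_G(c^n)$, and $x(1-x^2)(\cdots)'$ from the chain rule through $D_G(b/c)$. This is precisely the conceptual reason the same grammar produces $S_n$ from $a$ and $R_{n+1}$ from $a^2$, and it is worth stating explicitly rather than burying it in the algebra.
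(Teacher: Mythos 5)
Your proof is correct: the induction on $n$ driven by the identity $D_G(b/c)=cx(1-x^2)$ (with $x=b/c$) reduces each step exactly to the recursions $S_{n+1}(x)=x(nx+1)S_n(x)+x(1-x^2)S_n'(x)$ and $R_{n+2}(x)=x(nx+2)R_{n+1}(x)+x(1-x^2)R_{n+1}'(x)$ recorded in the preliminaries, and the base cases match $S_0(x)=R_1(x)=1$. The paper itself gives no proof here --- the lemma is imported from the cited reference --- but your argument is the standard derivation one would expect behind that citation, and your closing observation that the coefficient $1$ versus $2$ in the two recursions comes from $D_G(a)=ab$ versus $D_G(a^2)=2a^2b$ is exactly the right way to see why the same grammar produces $S_n$ from $a$ and $R_{n+1}$ from $a^2$.
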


\begin{theorem}\label{thmSn}
For any $n\geqslant 1$, let $s_n(x)=(-1)^{n+1}(1+x)(1-x^2)^{\lrf{(n-1)/2}}$ and
let $r_n(x)=(-1)^{n-1}{(x-1)}(1-x^2)^{\lrf{(n-1)/2}}$.
In particular, $s_1(x)=1+x$ and $r_1(x)=x-1$.
Then the polynomial $S_{n+1}(x)$ is expressible as the following
lower Hessenberg determinant of order $n$:
 \begin{equation*}
\begin{vmatrix}\
xs_1(x)&-1&0&\cdots&0&0\\
xs_2(x)&\binom{2}{1}s_1(x)&-1&\cdots&0&0\\
xs_3(x)&\binom{3}{1}s_2(x)&\binom{3}{2}s_1(x)&\cdots&0&0\\
\vdots&\vdots&\vdots&\ddots&\vdots&\vdots\\
xs_{n-1}(x)&\binom{n-1}{1}s_{n-2}(x)&\binom{n-1}{2}s_{n-3}(x)&\cdots&\binom{n-1}{n-2}s_1(x)&-1\\
xs_n(x)&\binom{n}{1}s_{n-1}(x)&\binom{n}{2}s_{n-2}(x)&\cdots&\binom{n}{n-2}s_2(x)&\binom{n}{n-1}s_1(x)
    \end{vmatrix}.
\end{equation*}
While $R_{n+1}(x)$ is expressible as the following
lower Hessenberg determinant of order $n+1$:
 \begin{equation*}
\begin{vmatrix}
1&-1&0&0&\cdots&0&0\\
x-1&s_1(x)&-1&0&\cdots&0&0\\
r_2(x)&s_2(x)&\binom{2}{1}s_1(x)&-1&\cdots&0&0\\
r_3(x)&s_3(x)&\binom{3}{1}s_2(x)&\binom{3}{2}s_1(x)&\cdots&0&0\\
\vdots&\vdots&\vdots&\vdots&\ddots&\vdots&\vdots\\
r_{n-1}(x)&s_{n-1}(x)&\binom{n-1}{1}s_{n-2}(x)&\binom{n-1}{2}s_{n-3}(x)&\cdots&\binom{n-1}{n-2}s_1(x)&-1\\
r_n(x)&s_n(x)&\binom{n}{1}s_{n-1}(x)&\binom{n}{2}s_{n-2}(x)&\cdots&\binom{n}{n-2}s_2(x)&\binom{n}{n-1}s_1(x)
    \end{vmatrix}.
\end{equation*}
\end{theorem}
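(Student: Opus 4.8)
The plan is to imitate the proof of Theorem~\ref{Andre}, now using the three–letter grammar $G=\{a\to ab,\ b\to bc,\ c\to b^2\}$ of Lemma~\ref{lemma3}. By~\eqref{derivapoly-4}, setting $a=c=1,\ b=x$ turns $D_G^{n+1}(a)$ into $S_{n+1}(x)$ and $D_G^n(a^2)$ into $R_{n+1}(x)$, so I only need to present each of these two expressions as $D_G^{m}$ of a quotient and invoke Lemma~\ref{lemma2}. Since $D_G(a)=ab$, one has $D_G^{n+1}(a)=D_G^n(ab)=D_G^n\!\bigl(1/(ab)^{-1}\bigr)$, which suggests $u=1,\ v=(ab)^{-1}$ for the $S$–determinant; and since $a^2=(a/b)\big/(ab)^{-1}$, the choice $u=a/b,\ v=(ab)^{-1}$ handles the $R$–determinant. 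Using the \emph{same} denominator $v=(ab)^{-1}$ in both cases is precisely what forces the columns built from the $D_G^k(v)$ to be identical in the two matrices, as observed in the statement.

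The computational core is to find closed forms for $D_G^k(v)$ and $D_G^k(a/b)$. The key structural fact, the analogue of the invariant $2x-y^2$ appearing in Theorem~\ref{Andre}, is that $P:=c^2-b^2$ is annihilated by $D_G$, since $D_G(c^2)=2b^2c=D_G(b^2)$; alongside the eigen-type relations $D_G(b+c)=b(b+c)$ and $D_G(b-c)=-b(b-c)$. I would first compute $D_G(v)=-(b+c)/(ab)$, $D_G^2(v)=c(b+c)/(ab)$, and likewise $D_G(a/b)=a(b-c)/b$, $D_G^2(a/b)=-ac(b-c)/b$. Then, because $D_G(P)=0$, the two–step recurrence $D_G^{k+2}(\,\cdot\,)=P\,D_G^{k}(\,\cdot\,)$ propagates by a single application of $D_G$, so it is enough to verify the base case $D_G^{3}=P\cdot D_G^{1}$ for each of $v$ and $a/b$. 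This produces polynomials $\hat s_k,\hat r_k$ in $b,c$ with $ab\,D_G^k(v)=-\hat s_k$ and $ab\,D_G^k(a/b)=\hat r_k$, which specialize at $a=c=1,\ b=x$ to $s_k(x)$ and $r_k(x)$, together with the boundary identities $ab\cdot v=1$ and $ab\cdot(a/b)=a^2\to 1$.

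With these in hand I would apply Lemma~\ref{lemma2}, which gives $D_G^{m}(u/v)=(-1)^{m}(ab)^{m+1}w_m$ for a lower Hessenberg $w_m$ of order $m+1$, and then reconcile the scalings. For $S_{n+1}$ (so $m=n$), the choice $u=1$ makes the first column of $w_n$ equal to $(1,0,\dots,0)^{\top}$; expanding along it lowers the order to $n$, after which each of the $n$ remaining columns carries the common factor $(ab)^{-1}$. Pulling these out turns $(ab)^{n+1}$ into a single leftover factor $ab$, the $v$–entries into the $s_k$, and the superdiagonal $v$ into $ab\cdot v=1$; a final factor of $-1$ from each column fixes the signs and turns the superdiagonal into $-1$, while the leftover $ab=x$ is absorbed into the first column to yield the entries $xs_k(x)$. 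For $R_{n+1}$ (again $m=n$), no expansion is needed: all $n+1$ columns of $w_n$ already share the factor $(ab)^{-1}$, so $(ab)^{n+1}w_n$ collapses with no leftover power of $ab$; factoring $-1$ from each of the $n$ columns built from $v$ again cancels the sign $(-1)^n$, leaving the first column equal to the $\hat r_k$ (hence $r_k$ after specialization) and the remaining columns in the $s_k$–pattern with superdiagonal $-1$. This asymmetry, one leftover $ab$ for $S$ but none for $R$, is exactly why the first column is $xs_k(x)$ in the first determinant and $r_k(x)$ in the second.

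The main obstacle is bookkeeping rather than conceptual: one must track the powers of $ab$ and the accumulated signs carefully enough that the superdiagonal emerges as exactly $-1$ and the first column as $xs_k$ (respectively $r_k$), and must make the two–step recurrence genuinely rigorous rather than merely checking a few derivatives. I would guard against slips by testing the smallest instances against known data, requiring the order–$2$ determinant to return $S_3(x)=x+3x^2+2x^3$ and the order–$4$ determinant to return $R_4(x)=2x+12x^2+10x^3$, the latter matching the value quoted for $R_4$ in the introduction.
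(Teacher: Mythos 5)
Your proposal is correct and follows essentially the same route as the paper's own proof: the grammar $G=\{a\to ab,\ b\to bc,\ c\to b^2\}$ from Lemma~\ref{lemma3}, the decompositions $ab=1\big/(ab)^{-1}$ and $a^2=(a/b)\big/(ab)^{-1}$, the invariant $D_G(c^2-b^2)=0$ driving the closed forms for the higher derivatives, and the final column rescalings by $x$ and $-x$. Your explicit induction via the two-step recurrence $D_G^{k+2}=(c^2-b^2)D_G^{k}$ is a slightly cleaner packaging of the same computation, and your sanity checks against $S_3$ and $R_4$ confirm the bookkeeping.
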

\begin{proof}
(A) By Lemma~\ref{lemma3}, we see that
$D_G^{n+1}(a)=D_G^{n}(ab)=ac^{n+1}S_{n+1}\left(\frac{b}{c}\right)$.
Since $D_G^{n}(ab)=D_G^{n}\left(\frac{1}{\frac{1}{ab}}\right)$,
let $u(a,b,c)=1$ and $v(a,b,c)=\frac{1}{ab}$. Note that
\begin{equation}\label{DGab}
\begin{aligned}
&D_G(v)=-\frac{b+c}{ab},~D_G\left(\frac{b+c}{ab}\right)=-c\frac{b+c}{ab},\\
&D_G\left(c\frac{b+c}{ab}\right)=-\frac{b+c}{ab}(c^2-b^2),~D_G(c^2-b^2)=0.
\end{aligned}
\end{equation}
For $k\geqslant 1$, $D_G^k(u)=0$, and
\begin{equation}\label{vab}
D_G^{2k}\left(v\right)=c\frac{b+c}{ab}(c^2-b^2)^{k-1},~D_G^{2k+1}\left(v\right)=-\frac{b+c}{ab}(c^2-b^2)^k.
\end{equation}
Letting $a=c=1$ and $b=x$, we get
$$D_G^{n}\left(v\right)|_{a=c=1,b=x}=(-1)^n\frac{1+x}{x}(1-x^2)^{\lrf{(n-1)/2}}.$$
Applying Lemmas~\ref{lemma2}, we arrive at
$$ac^{n+1}S_{n+1}\left(\frac{b}{c}\right)|_{a=c=1,b=x}=S_{n+1}(x)=(-1)^nx^{n+1}{w_n^{(1)}},$$
where the $(n+1)$ order determinant $w_n^{(1)}$ can be given as follows:
 \begin{equation*}
\begin{aligned}
  w_n^{(1)}&=\begin{vmatrix}
1&\frac{1}{x}&0&0&\cdots&0&0\\
0&-\frac{1+x}{x}&\binom{1}{1}\frac{1}{x}&0&\cdots&0&0\\
0&\frac{1+x}{x}&-\binom{2}{1}\frac{1+x}{x}&\binom{2}{2}\frac{1}{x}&\cdots&0&0\\
\vdots&\vdots&\vdots&\vdots&\ddots&\vdots&\vdots\\
0&(-1)^{n-1}\frac{1+x}{x}(1-x^2)^{\lrf{(n-2)/2}}&\cdots&\cdots&\cdots&-\binom{n-1}{n-2}\frac{1+x}{x}&\binom{n-1}{n-1}\frac{1}{x}\\
0&(-1)^n\frac{1+x}{x}(1-x^2)^{\lrf{(n-1)/2}}&\cdots&\cdots&\cdots&\binom{n}{n-2}\frac{1+x}{x}&-\binom{n}{n-1}\frac{1+x}{x}
    \end{vmatrix}\\
&=\begin{vmatrix}\
-\frac{1+x}{x}&\binom{1}{1}\frac{1}{x}&0&\cdots&0&0\\
\frac{1+x}{x}&-\binom{2}{1}\frac{1+x}{x}&\binom{2}{2}\frac{1}{x}&\cdots&0&0\\
\vdots&\vdots&\vdots&\ddots&\vdots&\vdots\\
(-1)^{n-1}\frac{1+x}{x}(1-x^2)^{\lrf{(n-2)/2}}&\cdots&\cdots&\cdots&-\binom{n-1}{n-2}\frac{1+x}{x}&\binom{n-1}{n-1}\frac{1}{x}\\
(-1)^n\frac{1+x}{x}(1-x^2)^{\lrf{(n-1)/2}}&\cdots&\cdots&\cdots&\binom{n}{n-2}\frac{1+x}{x}&-\binom{n}{n-1}\frac{1+x}{x}
    \end{vmatrix}.
\end{aligned}
  \end{equation*}
The last determinant is obtained by expanding $w_n^{(1)}$ along the first column.
Multiplying each column of the last determinant by $-x$ and then multiplying the first column by $x$,
we get the desired expression of $S_{n+1}(x)$.

(B) Note that $$D_G^{n}(a^2)=D_G^{n}\left(\frac{\frac{a}{b}}{\frac{1}{ab}}\right),~D_G\left(\frac{a}{b}\right)=\frac{a}{b}(b-c),~D_G\left(\frac{a}{b}(b-c)\right)=-\frac{ac}{b}(b-c),$$
$$D_G\left(\frac{ac}{b}(b-c)\right)=\frac{a}{b}(b-c)(b^2-c^2)=-\frac{a}{b}(b-c)(c^2-b^2).$$
Let $u(a,b,c)=\frac{a}{b}$ and $v(a,b,c)=\frac{1}{ab}$.
Then $D_G^k(v)$ can be computed as in~\eqref{vab}.
Note that $D_G(b^2-c^2)=0$. For $k\geqslant 1$, we have
 $$D_G^{2k-1}(u)=D_G^{2k-1}\left(\frac{a}{b}\right)=\frac{a}{b}(b-c)(c^2-b^2)^{k-1},D_G^{2k}(u)=D_G^{2k}\left(\frac{a}{b}\right)=-\frac{ac}{b}(b-c)(c^2-b^2)^{k-1}.$$
Letting $a=c=1$ and $b=x$, we get
$$D_G^{n}\left(u\right)|_{a=c=1,b=x}=(-1)^{n-1}\frac{x-1}{x}(1-x^2)^{\lrf{(n-1)/2}}.$$
Applying Lemmas~\ref{lemma2} and~\ref{lemma3}, we arrive at
$$a^2c^nR_{n+1}\left(\frac{b}{c}\right)|_{a=c=1,b=x}=R_{n+1}(x)=(-1)^nx^{n+1}{w_n^{(2)}},$$
where $w_n^{(2)}$ is the following
lower Hessenberg determinant of order $n+1$:
\begin{equation*}
\begin{aligned}
\begin{vmatrix}
\frac{1}{x}&\frac{1}{x}&0&0&\cdots&0&0\\
\frac{x-1}{x}&-\frac{1+x}{x}&\binom{1}{1}\frac{1}{x}&0&\cdots&0&0\\
-\frac{x-1}{x}&\frac{1+x}{x}&-\binom{2}{1}\frac{1+x}{x}&\binom{2}{2}\frac{1}{x}&\cdots&0&0\\
\vdots&\vdots&\vdots&\vdots&\ddots&\vdots&\vdots\\
D_G^{n-1}\left(u\right)|_{a=c=1,b=x}&D_G^{n-1}\left(v\right)|_{a=c=1,b=x}&\cdots&\cdots&\cdots&-\binom{n-1}{n-2}\frac{1+x}{x}&\binom{n-1}{n-1}\frac{1}{x}\\
D_G^{n}\left(u\right)|_{a=c=1,b=x}&D_G^{n}\left(v\right)|_{a=c=1,b=x}&\cdots&\cdots&\cdots&\binom{n}{n-2}\frac{1+x}{x}&-\binom{n}{n-1}\frac{1+x}{x}
    \end{vmatrix}.
\end{aligned}
  \end{equation*}
Multiplying the first column of the above determinant by $x$ and multiplying each of the other columns by $-x$, we get the desired expression of the alternating run polynomial $R_{n+1}(x)$.
\end{proof}

When $n=4$, Theorem~\ref{thmSn} says that
$$S_{5}(x)=
\begin{vmatrix}
x(1+x)&-1&0&0\\
-x(1+x)&\binom{2}{1}(1+x)&-1&0\\
x(1+x)(1-x^2)&-\binom{3}{1}(1+x)&\binom{3}{2}(1+x)&-1\\
-x(1+x)(1-x^2)&\binom{4}{1}(1+x)(1-x^2)&-\binom{4}{2}(1+x)&\binom{4}{3}(1+x)
    \end{vmatrix},$$
$$R_{5}(x)=
\begin{vmatrix}
1&-1&0&0&0\\
x-1&1+x&-1&0&0\\
-(x-1)&-(1+x)&\binom{2}{1}(1+x)&-1&0\\
(x-1)(1-x^2)&(1+x)(1-x^2)&-\binom{3}{1}(1+x)&\binom{3}{2}(1+x)&-1\\
-(x-1)(1-x^2)&-(1+x)(1-x^2)&\binom{4}{1}(1+x)(1-x^2)&-\binom{4}{2}(1+x)&\binom{4}{3}(1+x)
    \end{vmatrix}.$$
Combining~\eqref{Hn-recu} and Theorem~\ref{thmSn}, when
$$\det H_n=S_{n+1}(x),~h_{nn}=n(1+x),~\prod_{j=r}^{n-1}h_{j,j+1}=(-1)^{n-r},~h_{n,r}=\binom{n}{r-1}s_{n-r+1}(x),$$
we obtain
$$S_{n+1}(x)=n(1+x)S_n(x)+\sum_{r=1}^{n-1}\binom{n}{r-1}s_{n-r+1}(x)S_{r}(x).$$
Similarly, combining~\eqref{Hn-recu} and Theorem~\ref{thmSn}, when
$$\det H_{n+1}=R_{n+1}(x),~h_{n+1,n+1}=n(1+x),~h_{n+1,1}=r_n(x),$$
$$\prod_{j=r}^{n}h_{j,j+1}=(-1)^{n-r+1}~\text{for $1\leqslant  r\leqslant n$},~h_{n+1,r}=\binom{n}{r-2}s_{n-r+2}(x)~\text{for $2\leqslant  r\leqslant n$},$$
we arrive at
\begin{align*}
R_{n+1}(x)&=n(1+x)R_n(x)+r_n(x)+\sum_{r=2}^{n}\binom{n}{r-2}s_{n-r+2}R_{r-1}(x)\\
&=r_n(x)+\sum_{r=2}^{n+1}\binom{n}{r-2}s_{n-r+2}R_{r-1}(x).
\end{align*}

After simplifying, we get the following result.
\begin{corollary}\label{cor2}
For any $n\geqslant 1$, we have
\begin{align*}
S_{n+1}(x)&=(1+x)\sum_{r=1}^n\binom{n}{r-1}(-1)^{n-r}(1-x^2)^{\lrf{(n-r)/2}}S_r(x),\\
R_{n+1}(x)&=(-1)^{n-1}{(x-1)}(1-x^2)^{\lrf{(n-1)/2}}+\\&(1+x)\sum_{r=2}^{n+1}\binom{n}{r-2}(-1)^{n-r+1}(1-x^2)^{\lrf{(n-r+1)/2}}R_{r-1}(x),
\end{align*}
which imply that $S_n(x)$ is divisible by $(1+x)^{\lrf{n/2}}$ and $R_n(x)$ is divisible by $(1+x)^{\lrf{n/2}-1}$.
\end{corollary}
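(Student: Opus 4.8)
The plan is to separate the two displayed recurrences from the two divisibility assertions: the former are bookkeeping, the latter carry the content.

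First I would record the recurrences as simplifications of the two identities displayed immediately above, which were obtained by feeding Theorem~\ref{thmSn} into the Hessenberg expansion~\eqref{Hn-recu}. The only observation needed is that $s_1(x)=1+x$, so the leading term $n(1+x)S_n(x)$ coincides with the would-be $r=n$ summand $\binom{n}{n-1}s_1(x)S_n(x)$; absorbing it extends the sum to $\sum_{r=1}^{n}\binom{n}{r-1}s_{n-r+1}(x)S_r(x)$. Substituting $s_{n-r+1}(x)=(-1)^{n-r}(1+x)(1-x^2)^{\lrf{(n-r)/2}}$ and factoring out $1+x$ gives the first formula; the second is identical, retaining the stand-alone term $r_n(x)=(-1)^{n-1}(x-1)(1-x^2)^{\lrf{(n-1)/2}}$ and using $s_{n-r+2}(x)=(-1)^{n-r+1}(1+x)(1-x^2)^{\lrf{(n-r+1)/2}}$.

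For the divisibility I would induct on $n$ through these recurrences, exploiting $1-x^2=(1-x)(1+x)$ so that each power $(1-x^2)^{\ell}$ supplies exactly $\ell$ factors of $1+x$. Everything rests on the elementary estimate $\lrf{a/2}+\lrf{b/2}\geqslant\lrf{(a+b)/2}-1$ for $a,b\geqslant 0$, with equality precisely when $a$ and $b$ are both odd and the value $\lrf{(a+b)/2}$ attained otherwise. For $S_{n+1}(x)$ the $r$-th summand is $(1+x)(1-x^2)^{\lrf{(n-r)/2}}S_r(x)$, which by the inductive hypothesis $(1+x)^{\lrf{r/2}}\mid S_r(x)$ is divisible by $(1+x)^{1+\lrf{(n-r)/2}+\lrf{r/2}}$. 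Taking $a=n-r$ and $b=r$ in the estimate shows this exponent is at least $\lrf{(n+1)/2}$, the only tight case being $n$ even with $r$ odd, where both sides reduce to $\lrf{n/2}$; hence $(1+x)^{\lrf{(n+1)/2}}\mid S_{n+1}(x)$.

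The $R$ case proceeds identically but has two boundary subtleties that I expect to be the main obstacle. The stand-alone term carries $(1+x)^{\lrf{(n-1)/2}}$, and a one-line parity computation gives $\lrf{(n-1)/2}=\lrf{(n+1)/2}-1$ for every $n$, so it meets the target exactly rather than with slack. More delicate is that the inductive hypothesis for $R$ only guarantees $(1+x)^{\lrf{m/2}-1}\mid R_m$, an exponent that is negative at $m=1$; thus the $r=2$ summand, involving $R_1(x)=1$, must be treated on its own, with only the factors $(1+x)(1-x^2)^{\lrf{(n-1)/2}}$ available, whose exponent $1+\lrf{(n-1)/2}$ comfortably exceeds $\lrf{(n+1)/2}-1$. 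For $r\geqslant 3$ the estimate with $a=n-r+1$ and $b=r-1$ gives exponent $\lrf{(n-r+1)/2}+\lrf{(r-1)/2}\geqslant\lrf{(n+1)/2}-1$ in each parity. Combining the stand-alone term with all summands yields $(1+x)^{\lrf{(n+1)/2}-1}\mid R_{n+1}(x)$, and the induction closes once the base cases $S_1,S_2$ and $R_1,R_2$ are verified by hand.
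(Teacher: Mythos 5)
Your derivation of the two recurrences is exactly the paper's route (Theorem~\ref{thmSn} fed into the Hessenberg expansion~\eqref{Hn-recu}, then substituting $s_{n-r+1}(x)$, $s_{n-r+2}(x)$ and $r_n(x)$ and absorbing the diagonal term into the sum), and your induction for the divisibility via $\lrf{a/2}+\lrf{b/2}\geqslant\lrf{(a+b)/2}-1$ correctly supplies the details the paper leaves implicit in the phrase ``which imply''. The argument is correct, including the necessary separate treatment of the $r=2$ term of the $R$-recurrence, where the inductive hypothesis for $R_1(x)$ gives a negative exponent and cannot be invoked.
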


We can now conclude the following result.
\begin{theorem}\label{thmsntn2}
Let $f_n(x)=(-1)^{n+1}(1-x^2)^{\lrf{n/2}}$.
The up-down run polynomial $S_{n+1}(x)$
can be expressed as a lower Hessenberg determinant of order $n+1$:
 \begin{equation*}
\begin{vmatrix}
x&-1&0&0&\cdots&0&0\\
xS_1(x)&1&-\binom{1}{1}&0&\cdots&0&0\\
xS_2(x)&-(1-x^2)&\binom{2}{1}&-\binom{2}{2}&\cdots&0&0\\
\vdots&\vdots&\vdots&\vdots&\ddots&\vdots&\vdots\\
xS_{n-1}(x)&{f}_{n-1}(x)&\binom{n-1}{1}{f}_{n-2}(x)&\binom{n-1}{2}{f}_{n-3}(x)&\cdots&\binom{n-1}{n-2}&-\binom{n-1}{n-1}\\
xS_n(x)&{f}_n(x)&\binom{n}{1}{f}_{n-1}(x)&\binom{n}{2}{f}_{n-2}(x)&\cdots&-\binom{n}{n-2}(1-x^2)&\binom{n}{n-1}
    \end{vmatrix}.
  \end{equation*}
While the polynomial $R_{n+1}(x)$ can be expressed as a lower Hessenberg determinant of order $n$:
 \begin{equation*}
\begin{vmatrix}
2x&-1&0&0&\cdots&0&0\\
2xR_2(x)&1&-\binom{1}{1}&0&\cdots&0&0\\
2xR_3(x)&-(1-x^2)&\binom{2}{1}&-\binom{2}{2}&\cdots&0&0\\
\vdots&\vdots&\vdots&\vdots&\ddots&\vdots&\vdots\\
2xR_{n-1}(x)&{f}_{n-2}(x)&\binom{n-2}{1}{f}_{n-3}(x)&\binom{n-2}{2}{f}_{n-4}(x)&\cdots&\binom{n-2}{n-3}&-\binom{n-2}{n-2}\\
2xR_{n}(x)&{f}_{n-1}(x)&\binom{n-1}{1}{f}_{n-2}(x)&\binom{n-1}{2}{f}_{n-3}(x)&\cdots&-\binom{n-1}{n-3}(1-x^2)&\binom{n-1}{n-2}
    \end{vmatrix}.
  \end{equation*}
\end{theorem}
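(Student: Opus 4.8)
The plan is to apply the grammatical quotient rule (Lemma~\ref{lemma2}) to the same grammar $G=\{a\to ab,\ b\to bc,\ c\to b^2\}$ that underlies Lemma~\ref{lemma3}, but with a simpler numerator/denominator pair than the one used in Theorem~\ref{thmSn}. The crucial structural fact is that $c^2-b^2$ is annihilated by $D_G$, since $D_G(c^2-b^2)=2c\,b^2-2b\cdot bc=0$; thus powers of $c^2-b^2$ behave as grammatical constants and are exactly what produce the factors $(1-x^2)^{\lrf{k/2}}$ in $f_k(x)$.

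First I would treat $S_{n+1}(x)$. Put $u=a$ and $v=1/b$, so that $u/v=ab=D_G(a)$ and hence $D_G^n(u/v)=D_G^{n+1}(a)$, which equals $S_{n+1}(x)$ at $a=c=1,\,b=x$ by Lemma~\ref{lemma3}. The core computation is a one-line induction, starting from $D_G(1/b)=-c/b$ and using the invariance of $c^2-b^2$, giving
\[
D_G^{2j}(1/b)=\frac{(c^2-b^2)^j}{b},\qquad D_G^{2j-1}(1/b)=-\frac{c(c^2-b^2)^{j-1}}{b},
\]
so that $D_G^{k}(1/b)\big|_{a=c=1,b=x}=(-1)^k(1-x^2)^{\lrf{k/2}}/x$. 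Feeding $u=a$, $v=1/b$ into Lemma~\ref{lemma2} yields $S_{n+1}(x)=(-1)^n x^{n+1}w_n$, where $w_n$ is an $(n+1)\times(n+1)$ lower Hessenberg determinant whose first column is $D_G^k(a)\big|=S_k(x)$ and whose body entries are the binomial-weighted values $\binom{k}{j-1}D_G^{k-j+1}(1/b)\big|$. Multiplying the first column by $x$ and each of the remaining $n$ columns by $-x$ scales the determinant by $x(-x)^n=(-1)^n x^{n+1}$, exactly cancelling the prefactor; simultaneously each body entry becomes $\binom{k}{j-1}f_{k-j+1}(x)$ and the first column becomes $xS_k(x)$, producing the asserted determinant.

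The polynomial $R_{n+1}(x)$ is handled identically, but with $u=2a^2$ (and the same $v=1/b$) and with Lemma~\ref{lemma2} applied at order $n-1$ rather than $n$. Here $u/v=2a^2b=D_G(a^2)$, so $D_G^{n-1}(u/v)=D_G^{n}(a^2)=R_{n+1}(x)$ by Lemma~\ref{lemma3}, while the first column of the resulting $n\times n$ determinant is $D_G^k(2a^2)\big|=2R_{k+1}(x)$. Scaling the first column by $x$ and the other $n-1$ columns by $-x$ again cancels the prefactor $(-1)^{n-1}x^n$, turns the first column into $2xR_{k+1}(x)$ and the body into the $f_k$-pattern, giving the second determinant; one checks directly that for $n=3$ this reproduces the displayed value $R_4(x)=2x+12x^2+10x^3$.

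I expect the only genuine work to be the inductive evaluation of $D_G^k(1/b)$ together with the sign-and-binomial bookkeeping in the rescaling step; everything else is a direct specialization of Lemmas~\ref{lemma2} and~\ref{lemma3}. The one point warranting care is that the common factor $1/x$ in every body entry is uniformly absorbed by the per-column factor $-x$, which is precisely what makes the clean polynomials $f_k(x)$ (rather than the rational expressions $D_G^k(1/b)$) emerge; this mirrors the cleanup already performed in the proof of Theorem~\ref{thmSn}.
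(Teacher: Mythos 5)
Your proposal is correct and follows essentially the same route as the paper: the same grammar $G=\{a\rightarrow ab,\ b\rightarrow bc,\ c\rightarrow b^2\}$, the same choices $u=a$ (resp.\ $a^2$ up to the factor $2$) and $v=1/b$, the same inductive evaluation of $D_G^k(1/b)$ via the invariance of $c^2-b^2$, and the same column rescaling that absorbs the prefactor $(-1)^n x^{n+1}$ (resp.\ $(-1)^{n-1}x^n$). The only cosmetic difference is that you place the factor $2$ inside $u$ whereas the paper keeps it outside and absorbs it when scaling the first column by $2x$.
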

\begin{proof}
(A) Consider the grammar $G=\{a\rightarrow ab, b\rightarrow bc, c\rightarrow b^2\}$. It is clear that
$$D_G^{n+1}(a)=D_G^n(ab)=D_G^n\left(\frac{a}{\frac{1}{b}}\right).$$
Let $u(a,b,c)=a$ and $v(a,b,c)=\frac{1}{b}$. From Lemma~\ref{lemma3}, we see that
$D_G^n(u)|_{a=c=1,b=x}=S_n\left(x\right)$.
Note that
$$D_G(v)=D_G\left(\frac{1}{b}\right)=-\frac{c}{b},~D_G\left(\frac{c}{b}\right)=\frac{b^2-c^2}{b}=-\frac{c^2-b^2}{b}.$$
Then for $k\geqslant 1$, the higher derivatives of $v$ are given as follows:
$$D_G^{2k-1}(v)=-\frac{c}{b}(c^2-b^2)^{k-1},~D_G^{2k}(v)=\frac{1}{b}(c^2-b^2)^{k}.$$
Setting $a=c=1$ and $b=x$, we obtain
\begin{equation}\label{v}
D_G^{2k-1}(v)|_{a=c=1,b=x}=-\frac{1}{x}(1-x^2)^{k-1},~D_G^{2k}(v)|_{a=c=1,b=x}=\frac{1}{x}(1-x^2)^{k}.
\end{equation}
We define $\widetilde{f}_n(x)=D_G^{n}(v)|_{a=c=1,b=x}$
Then $$\widetilde{f}_n(x)=(-1)^n\frac{1}{x}(1-x^2)^{\lrf{n/2}}.$$
It follows from Lemma~\ref{lemma2} that $S_{n+1}(x)=(-1)^nx^{n+1}w_n^{(1)}$, where $w_n^{(1)}$ is the following
lower Hessenberg determinant of order $n+1$:
 \begin{equation*}
  w_n^{(1)}=\begin{vmatrix}
1&\frac{1}{x}&0&0&\cdots&0&0\\
S_1(x)&-\frac{1}{x}&\binom{1}{1}\frac{1}{x}&0&\cdots&0&0\\
S_2(x)&\frac{1}{x}(1-x^2)&-\binom{2}{1}\frac{1}{x}&\binom{2}{2}\frac{1}{x}&\cdots&0&0\\
\vdots&\vdots&\vdots&\vdots&\ddots&\vdots&\vdots\\
S_{n-1}(x)&\widetilde{f}_{n-1}(x)&\binom{n-1}{1}\widetilde{f}_{n-2}(x)&\binom{n-1}{2}\widetilde{f}_{n-3}(x)&\cdots&-\binom{n-1}{n-2}\frac{1}{x}&\binom{n-1}{n-1}\frac{1}{x}\\
S_n(x)&\widetilde{f}_n(x)&\binom{n}{1}\widetilde{f}_{n-1}(x)&\binom{n}{2}\widetilde{f}_{n-2}(x)&\cdots&\binom{n}{n-2}\frac{1}{x}(1-x^2)&-\binom{n}{n-1}\frac{1}{x}
    \end{vmatrix}.
  \end{equation*}
First multiplying each column by $x$, and then except the first column, multiplying each of the other columns by $-1$,
we see that $(-1)^nx^{n+1}w_n^{(1)}$ can be expressed as follows:
 \begin{equation*}
\begin{vmatrix}
x&-1&0&0&\cdots&0&0\\
xS_1(x)&1&-\binom{1}{1}&0&\cdots&0&0\\
xS_2(x)&-(1-x^2)&\binom{2}{1}&-\binom{2}{2}&\cdots&0&0\\
\vdots&\vdots&\vdots&\vdots&\ddots&\vdots&\vdots\\
xS_{n-1}(x)&{f}_{n-1}(x)&\binom{n-1}{1}{f}_{n-2}(x)&\binom{n-1}{2}{f}_{n-3}(x)&\cdots&\binom{n-1}{n-2}&-\binom{n-1}{n-1}\\
xS_n(x)&{f}_n(x)&\binom{n}{1}{f}_{n-1}(x)&\binom{n}{2}{f}_{n-2}(x)&\cdots&-\binom{n}{n-2}(1-x^2)&\binom{n}{n-1}
    \end{vmatrix},
  \end{equation*}
where $f_n(x)=(-x)\widetilde{f}_n(x)=(-1)^{n+1}(1-x^2)^{\lrf{n/2}}$.

(B) For $n\geqslant 1$, it is clear that $$D_G^{n}(a^2)=D_G^{n-1}(2a^2b)=2D_G^{n-1}(a^2b)=2D_G^{n-1}\left(\frac{a^2}{\frac{1}{b}}\right).$$
Let $u(a,b,c)=a^2$ and $v(a,b,c)=\frac{1}{b}$. Then for $k\geqslant 1$, it follows from~\eqref{derivapoly-4} and~\eqref{v} that
$$D_G^{k}(u)|_{a=c=1,b=x}=R_{k+1}\left(x\right),~D_G^{k}(v)|_{a=c=1,b=x}=(-1)^k\frac{1}{x}(1-x^2)^{\lrf{k/2}}.$$
Applying Lemma~\ref{lemma2}, we see that $R_{n+1}(x)=2(-1)^{n-1}x^{n}w_n^{(2)}$, where $w_n^{(2)}$ is a
lower Hessenberg determinant of order $n$. Similarly, multiplying the first column by $2x$ and multiplying each of the other columns by $-x$,
we immediately get the desired representation of $R_{n+}(x)$.
\end{proof}

Combining~\eqref{Hn-recu} and Theorem~\ref{thmsntn2}, it is routine to verify the following result.
\begin{corollary}
For $n\geqslant 1$, we have
\begin{align*}
S_{n+1}(x)&=(n+x)S_n(x)+(1-x^2)\sum_{r=2}^n\binom{n}{r-2}(-1)^{n-r+1}(1-x^2)^{\lrf{(n-r)/2}}S_{r-1}(x),\\
R_{n+1}(x)&=2xR_n(x)+\sum_{r=2}^{n}\binom{n-1}{r-2}(-1)^{n-r}(1-x^2)^{\lrf{(n-r+1)/2}}R_r(x),
\end{align*}
which imply that $S_n(x)$ is divisible by $(1+x)^{\lrf{n/2}}$ and $R_n(x)$ is divisible by $(1+x)^{\lrf{n/2}-1}$.
\end{corollary}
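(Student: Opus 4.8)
The plan is to read off the entries of the two lower Hessenberg matrices in Theorem~\ref{thmsntn2} and feed them into the determinant recursion~\eqref{Hn-recu}. First I would record that, in both matrices, the entries $h_{ij}$ depend only on $i$ and $j$ and not on the order of the matrix: the superdiagonal is constant, $h_{i,i+1}=-1$; the first column is $h_{i,1}=xS_{i-1}(x)$ (respectively $2xR_i(x)$, using $R_1=S_0=1$); and for $2\le j\le i$ one has $h_{ij}=\binom{i-1}{j-2}f_{i-j+1}(x)$, so the diagonal entries are $h_{ii}=\binom{i-1}{i-2}f_1(x)=i-1$ for $i\ge2$ since $f_1(x)=1$. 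Because the entries are size-independent, the leading $k\times k$ principal submatrix of either matrix is exactly the matrix produced by Theorem~\ref{thmsntn2} at the corresponding smaller index, so $\det H_k=S_k(x)$ for the first matrix and $\det H_k=R_{k+1}(x)$ for the second. A one-line base check, $\det H_1=x=S_1(x)$ and $\det H_2=x+x^2=S_2(x)$, confirms the normalization.

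Second, I would exploit the constant superdiagonal to collapse the signs in~\eqref{Hn-recu}. Since $h_{j,j+1}=-1$, the product $\prod_{j=r}^{n}h_{j,j+1}=(-1)^{n-r+1}$ cancels the factor $(-1)^{n+1-r}$, leaving the clean identity $\det H_{n+1}=h_{n+1,n+1}\det H_n+\sum_{r=1}^{n}h_{n+1,r}\det H_{r-1}$ for the $S$-matrix, and its order-$n$ analogue for the $R$-matrix. For $S$ the term $r=1$ contributes $h_{n+1,1}\det H_0=xS_n(x)$ and the diagonal term contributes $h_{n+1,n+1}S_n(x)=nS_n(x)$, which combine to $(n+x)S_n(x)$; the remaining terms $2\le r\le n$ give $\sum_r\binom{n}{r-2}f_{n-r+2}(x)S_{r-1}(x)$, and rewriting $f_{n-r+2}(x)=(1-x^2)(-1)^{n-r+1}(1-x^2)^{\lfloor(n-r)/2\rfloor}$ via $\lfloor(n-r+2)/2\rfloor=\lfloor(n-r)/2\rfloor+1$ yields the first stated recurrence. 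The $R$-case is identical in spirit: the $r=1$ term gives $2xR_n(x)$, while the diagonal term $(n-1)R_n(x)=\binom{n-1}{n-2}f_1(x)R_n(x)$ is precisely the $r=n$ summand of $\sum_{r=2}^{n}\binom{n-1}{r-2}f_{n-r+1}(x)R_r(x)$, so it folds into the displayed sum once one writes $f_{n-r+1}(x)=(-1)^{n-r}(1-x^2)^{\lfloor(n-r+1)/2\rfloor}$.

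For the divisibility assertions I would not argue from the two recurrences just proved, since their leading terms $(n+x)S_n(x)$ and $2xR_n(x)$ are visibly not divisible by $1+x$; the claimed powers emerge only after cancellation among all summands. These are instead the same statements already recorded in Corollary~\ref{cor2}, whose companion recurrences carry an explicit factor $1+x$ in front of the sum. The cleanest route is induction on $n$ using those recurrences: writing $1-x^2=(1+x)(1-x)$ and invoking the inductive hypotheses $(1+x)^{\lfloor r/2\rfloor}\mid S_r(x)$ and $(1+x)^{\lfloor r/2\rfloor-1}\mid R_r(x)$, each summand acquires a power $1+\lfloor(n-r)/2\rfloor+\lfloor r/2\rfloor$ for $S$ (respectively $\lfloor(n-r+1)/2\rfloor+\lfloor(r-1)/2\rfloor$ for $R$) of $1+x$, and a short case analysis on the parities of $n$ and $r$ shows this is always at least $\lfloor(n+1)/2\rfloor$ (respectively $\lfloor(n+1)/2\rfloor-1$), with the extra term $(-1)^{n-1}(x-1)(1-x^2)^{\lfloor(n-1)/2\rfloor}$ in the $R$-recurrence contributing exactly $(1+x)^{\lfloor(n+1)/2\rfloor-1}$.

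I anticipate the only genuinely delicate point to be bookkeeping: matching the floor exponents $\lfloor(n-r)/2\rfloor$ produced by the $f_m(x)$ against those in the stated recurrences, and keeping track of the fact that the diagonal contribution in the $R$-case is the $r=n$ endpoint of the sum rather than a separate term. The sign collapse and the identification of the leading principal minors are routine once the size-independence of the entries is observed.
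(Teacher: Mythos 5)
Your proposal is correct and follows exactly the route the paper intends (the paper offers no written proof, stating only that the result is ``routine to verify'' by combining the recursion~\eqref{Hn-recu} with Theorem~\ref{thmsntn2}): you correctly identify the size-independent entries $h_{i,1}=xS_{i-1}(x)$ (resp.\ $2xR_i(x)$), $h_{i,j}=\binom{i-1}{j-2}f_{i-j+1}(x)$ and $h_{i,i+1}=-1$, observe the sign cancellation, identify the leading principal minors with $S_k(x)$ (resp.\ $R_{k+1}(x)$), and correctly fold the diagonal term into the $r=n$ endpoint of the $R$-sum. Your caveat about the divisibility assertion is also well taken: the leading terms $(n+x)S_n(x)$ and $2xR_n(x)$ do not visibly carry the extra factor of $1+x$ needed when $n$ is odd (resp.\ even), so the claimed powers do not follow from these recurrences by naive induction, and rerouting the divisibility through the $(1+x)$-factored recurrences of Corollary~\ref{cor2} with the parity case analysis you sketch is a more defensible justification than the paper's bare ``which imply.''
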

\subsection{On the type $B$ alternating run polynomials over up signed permutations}
\hspace*{\parindent}

Let $T_n(x)=\sum_{k=1}^nT(n,k)x^k$. For $n\geqslant 2$ and $1\leqslant k\leqslant n$,
Zhao~\cite[Theorem~4.2.1]{Zhao} showed that the numbers $T(n,k)$ satisfy the
the following recurrence relation
\begin{equation}\label{tnk-recurrence}
T(n,k)=(2k-1)T(n-1,k)+3T(n-1,k-1)+(2n-2k+2)T(n-1,k-2),
\end{equation}
where $T(1,1)=1$ and $T(1,k)=0$ for $k>1$.
We now present a grammatical description.
\begin{lemma}\label{lemma5}
If $G=\{a\rightarrow a(b+c), b\rightarrow 2bc, c\rightarrow 2b^2\}$,
then for $n\geqslant 1$, we have
\begin{equation}\label{derivapoly-6}
D_G^n(a)|_{a=b=x,c=1}=(1+x)T_n(x).
\end{equation}
\end{lemma}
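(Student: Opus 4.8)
The plan is to reduce the two-letter bookkeeping of the grammar to a single-variable differential recurrence and then match it against the known recurrence for $T_n(x)$. First I would record the structural shape of $D_G^n(a)$. Since the only rule producing an $a$ is $a\to a(b+c)$ and neither $b\to 2bc$ nor $c\to 2b^2$ creates an $a$, an easy induction shows $D_G^n(a)=a\,P_n(b,c)$, where $P_n$ is a polynomial in $b,c$ alone. Moreover each application of $D_G$ raises the total $(b,c)$-degree by exactly one, so $P_n$ is homogeneous of degree $n$, with $P_1=b+c$. Applying the Leibniz rule~\eqref{Leibniz} to $D_G(aP_n)=D_G(a)P_n+aD_G(P_n)$ gives the recurrence $P_{n+1}=(b+c)P_n+\mathcal{D}(P_n)$, where $\mathcal{D}=2bc\,\partial_b+2b^2\,\partial_c$ is the restriction of $D_G$ to polynomials in $b,c$.

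Next I would specialize. Setting $a=b=x$ and $c=1$ turns $D_G^n(a)$ into $x\,Q_n(x)$ with $Q_n(x):=P_n(x,1)$, so the claim~\eqref{derivapoly-6} becomes $x\,Q_n(x)=(1+x)T_n(x)$. The obstacle is that the recurrence for $P_{n+1}$ involves $\partial_c P_n$, which is invisible after setting $c=1$. This is resolved by homogeneity: Euler's relation $b\,\partial_b P_n+c\,\partial_c P_n=nP_n$ gives, at $c=1$, the identity $(\partial_c P_n)(x,1)=nQ_n(x)-xQ_n'(x)$, while obviously $(\partial_b P_n)(x,1)=Q_n'(x)$. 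Substituting $b=x,c=1$ into $P_{n+1}=(b+c)P_n+2bc\,\partial_bP_n+2b^2\,\partial_cP_n$ then yields
\[
Q_{n+1}(x)=(2nx^2+x+1)Q_n(x)+2x(1-x^2)Q_n'(x).
\]

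Finally I would verify that $x\,Q_n(x)$ and $(1+x)T_n(x)$ obey the same first-order recurrence and agree initially, so that they coincide for all $n\geqslant 1$. The base case is immediate: $Q_1(x)=x+1$ gives $xQ_1(x)=x(1+x)=(1+x)T_1(x)$ since $T_1(x)=x$. For the inductive step I would substitute $Q_n=(1+x)T_n/x$ into the displayed recurrence; a direct computation, using $Q_n'=(x(1+x)T_n'-T_n)/x^2$, shows that the $T_n'$-terms agree automatically and that both $T_n$-coefficients collapse to $2nx^3+2nx^2+3x^2+2x-1=(1+x)(2nx^2+3x-1)$, which is exactly the coefficient arising from the recorded recurrence $T_{n+1}(x)=(2nx^2+3x-1)T_n(x)+2x(1-x^2)\frac{\mathrm{d}}{\mathrm{d}x}T_n(x)$. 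Hence $xQ_{n+1}(x)=(1+x)T_{n+1}(x)$, completing the induction. I expect the only genuinely delicate point to be the homogeneity/Euler step that eliminates the hidden $\partial_c$-dependence; once past it, the remaining matching of recurrences is routine polynomial algebra.
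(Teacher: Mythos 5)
Your proof is correct, but it takes a genuinely different route from the paper's. The paper guesses the full bivariate normal form $D_G^n(a)=a(b+c)\sum_{k=1}^nT(n,k)b^{k-1}c^{n-k}$ (its equation~\eqref{DGaT}) and verifies it by induction, extracting the coefficient of $a(b+c)b^{k-1}c^{n-k+1}$ and matching it against the triangle recurrence~\eqref{tnk-recurrence} for the numbers $T(n,k)$; the specialization $a=b=x$, $c=1$ is then immediate. You instead keep only the coarse structural facts --- $D_G^n(a)=aP_n(b,c)$ with $P_n$ homogeneous of degree $n$ --- specialize first, and use Euler's homogeneity relation to eliminate the hidden $\partial_c$-dependence, which closes the recurrence for $Q_n(x)=P_n(x,1)$ as
\begin{equation*}
Q_{n+1}(x)=(2nx^2+x+1)Q_n(x)+2x(1-x^2)Q_n'(x),
\end{equation*}
and you then match $xQ_n(x)$ against the differential recurrence for $T_n(x)$ recorded in Section~2.3. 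Both computations check out (in particular the coefficient identity $(2nx^2+x+1)(1+x)-2(1-x^2)=(1+x)(2nx^2+3x-1)$ is right). What your route buys is that you never need to guess the exact bivariate shape of $D_G^n(a)$, and you work with the polynomial-level recurrence for $T_n(x)$ rather than the coefficient-level one; the Euler-relation trick is a nice, reusable device for grammars whose derivatives are homogeneous. What the paper's route buys is the explicit identity~\eqref{DGaT} itself, which is reused verbatim in the proofs of Theorems~\ref{thmT} and~\ref{TS}; note, though, that homogeneity lets you recover it from your univariate statement via $P_n(b,c)=c^nP_n(b/c,1)$, so nothing downstream is actually lost.
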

\begin{proof}
Note that $D_G(a)=a(b+c),~D_G^2(a)=a(b+c)(c+3b)$ and $D_G^3(a)=a(b+c)(c^2+12bc+11b^2)$.
So the result holds for $n\leqslant 3$. We proceed by induction.
Assume that
\begin{equation}\label{DGaT}
D_G^n(a)=a(b+c)\sum_{k=1}^nT(n,k)b^{k-1}c^{n-k}.
\end{equation}
Then
\begin{align*}
D_G^{n+1}(a)&=D\left(a(b+c)\sum_{k=1}^nT(n,k)b^{k-1}c^{n-k}\right)\\
&=a(b+c)(c+3b)\sum_{k}T(n,k)b^{k-1}c^{n-k}+\\
&a(b+c)\sum_{k=1}^nT(n,k)\left[2(k-1)b^{k-1}c^{n-k+1}+2(n-k)b^{k+1}c^{n-k-1}\right].
\end{align*}
Exacting the coefficients of $a(b+c)b^{k-1}c^{n-k+1}$ in the last expression, we obtain
$$T(n,k)+3T(n,k-1)+2(k-1)T(n,k)+2(n-k+2)T(n,k-2)=T(n+1,k).$$
It follows from~\eqref{tnk-recurrence} that
$D_G^{n+1}(a)=a(b+c)\sum_{k=1}^{n+1}T(n+1,k)b^{k-1}c^{n-k+1}$, as desired.
\end{proof}
\begin{theorem}\label{thmT}
For any $n\geqslant 1$, let $${t}_n(x)=\left\{
                       \begin{array}{ll}
                         (1+x)(1-x^2)^{k-1}, & \hbox{if $n=2k-1$;} \\
                         -(1-x^2)^k, & \hbox{if $n=2k$.}
                       \end{array}
                     \right.
$$
Then $T_{n}(x)$ is expressible as the following
lower Hessenberg determinant of order $n$:
\begin{equation*}
\begin{vmatrix}
x&-1&0&\cdots&0&0\\
-x(1-x)&\binom{2}{1}(1+x)&-1&\cdots&0&0\\
\frac{x}{1+x}t_3(x)&\binom{3}{1}t_2(x)&\binom{3}{2}(1+x)&\cdots&0&0\\
\vdots&\vdots&\vdots&\ddots&\vdots&\vdots\\
\frac{x}{1+x}{t}_{n-1}(x)&\binom{n-1}{1}{t}_{n-2}(x)&\binom{n-1}{2}{t}_{n-3}(x)&\cdots&\binom{n-1}{n-2}(1+x)&-1\\
\frac{x}{1+x}{t}_n(x)&\binom{n}{1}{t}_{n-1}(x)&\binom{n}{2}{t}_{n-2}(x)&\cdots&-\binom{n}{n-2}(1-x^2)&\binom{n}{n-1}(1+x)
    \end{vmatrix}.
  \end{equation*}
\end{theorem}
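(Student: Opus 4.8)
The plan is to apply the grammatical determinant formula of Lemma~\ref{lemma2} to the grammar $G=\{a\rightarrow a(b+c),\ b\rightarrow 2bc,\ c\rightarrow 2b^2\}$ of Lemma~\ref{lemma5}, following the template already used for $S_n(x)$ and $R_n(x)$ in Theorems~\ref{thmSn} and~\ref{thmsntn2}. Since Lemma~\ref{lemma5} gives $D_G^n(a)|_{a=b=x,c=1}=(1+x)T_n(x)$, I would first write $D_G^n(a)=D_G^n\!\left(\frac{1}{1/a}\right)$ and set $u=1$, $v=\frac{1}{a}$, so that $D_G^k(u)=0$ for all $k\geqslant 1$ and the entire content of the determinant is carried by the derivatives of $v$.

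The key computation is the closed form of $D_G^k(1/a)$. A direct calculation gives $D_G(1/a)=-\frac{b+c}{a}$ and $D_G^2(1/a)=\frac{c^2-b^2}{a}$, and the crucial simplification is that $D_G(c^2-b^2)=2c\cdot 2b^2-2b\cdot 2bc=0$. Because $c^2-b^2$ is annihilated by $D_G$, a one-line induction yields $D_G^{2k-1}(1/a)=-\frac{(b+c)(c^2-b^2)^{k-1}}{a}$ and $D_G^{2k}(1/a)=\frac{(c^2-b^2)^{k}}{a}$. Evaluating at $a=b=x$, $c=1$ and comparing with the definition of $t_n(x)$ (under which $b+c\mapsto 1+x$ and $c^2-b^2\mapsto 1-x^2$), I obtain the uniform formula $D_G^n(1/a)|_{a=b=x,c=1}=-\frac{1}{x}\,t_n(x)$ for every $n\geqslant 1$, together with $v|_{a=b=x,c=1}=\frac{1}{x}$.

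With these values in hand, Lemma~\ref{lemma2} gives $(1+x)T_n(x)=(-1)^n x^{n+1}w_n$, where $w_n$ is the lower Hessenberg determinant of order $n+1$ whose first column has a single nonzero entry, a $1$ in the top-left position (because $u=1$). Expanding $w_n$ along this first column reduces it to a determinant of order $n$ in which row $i$, column $1$ equals $-\frac1x t_i(x)$, the superdiagonal entries equal $\frac1x$, and the entry in row $i$, column $j$ (for $2\leqslant j\leqslant i$) equals $-\frac1x\binom{i}{j-1}t_{i-j+1}(x)$. The final step is a rescaling of columns: multiplying the first column by $-\frac{x^2}{1+x}$ and each of the remaining $n-1$ columns by $-x$ converts every entry into the claimed shape, the superdiagonal into $-1$, and the first column into $\frac{x}{1+x}t_i(x)$.

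The main thing to watch is the bookkeeping of the scalar factors. The column rescaling multiplies the determinant by $\left(-\frac{x^2}{1+x}\right)(-x)^{n-1}=(-1)^n\frac{x^{n+1}}{1+x}$, and combined with $(1+x)T_n(x)=(-1)^n x^{n+1}w_n$ this collapses exactly to $T_n(x)$; the asymmetric factor $\frac{x}{1+x}$ on the first column is forced precisely by the extra $(1+x)$ appearing in Lemma~\ref{lemma5}, and is the only non-routine point of the argument. As a sanity check, for $n=2$ the resulting determinant is $2x(1+x)-x(1-x)=x+3x^2=T_2(x)$, in agreement with the stated values.
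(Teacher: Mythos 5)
Your proposal is correct and follows essentially the same route as the paper: both use Lemma~\ref{lemma5} with $u=1$, $v=1/a$, exploit $D_G(c^2-b^2)=0$ to get the closed form $D_G^{2k-1}(1/a)=-\frac{b+c}{a}(c^2-b^2)^{k-1}$, $D_G^{2k}(1/a)=\frac{1}{a}(c^2-b^2)^k$, apply Lemma~\ref{lemma2}, expand along the first column, and rescale columns (your single scaling of the first column by $-\frac{x^2}{1+x}$ is just the paper's two-step scaling by $-x$ and then $\frac{x}{1+x}$ combined). The bookkeeping of the factor $(-1)^n x^{n+1}/(1+x)$ and the $n=2$ check are both right.
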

\begin{proof}
Consider $G=\{a\rightarrow a(b+c), b\rightarrow 2bc, c\rightarrow 2b^2\}$.
By Lemma~\ref{lemma5}, we find that
$$D_G^n(a)=D_G^n\left(\frac{1}{\frac{1}{a}}\right).$$
Let $u(a,b,c)=1$ and $v(a,b,c)=\frac{1}{a}$. Then $D_G^k(u)=0$ for $k\geqslant 1$.
Note that
$$D_G(v)=D_G\left(\frac{1}{a}\right)=-\frac{b+c}{a},~D_G\left(\frac{b+c}{a}\right)=\frac{b^2-c^2}{a}=-\frac{c^2-b^2}{a}.$$
Since $D_G(c^2-b^2)=0$, it follows that for $k\geqslant 1$, we have
\begin{equation}\label{DG2kv}
D_G^{2k-1}(v)=-\frac{b+c}{a}(c^2-b^2)^{k-1},~D_G^{2k}(v)=\frac{1}{a}(c^2-b^2)^k.
\end{equation}

When $a=b=x$ and $c=1$,
$$D_G^{2k-1}(v)|_{a=b=x,c=1}=-\frac{1+x}{x}(1-x^2)^{k-1},~D_G^{2k}(v)|_{a=b=x,c=1}=\frac{1}{x}(1-x^2)^k.$$
For $n\geqslant 1$, we define
$$\widetilde{t}_n(x)=\left\{
                       \begin{array}{ll}
                         -\frac{1+x}{x}(1-x^2)^{k-1}, & \hbox{if $n=2k-1$;} \\
                         \frac{1}{x}(1-x^2)^k, & \hbox{if $n=2k$.}
                       \end{array}
                     \right.
$$
Applying Lemma~\ref{lemma2} and~\eqref{DGaT}, we arrive at
$$a(b+c)\sum_{k=1}^nT(n,k)b^{k-1}c^{n-k}|_{a=b=x,c=1}=(1+x)T_{n}(x)=(-1)^nx^{n+1}{w_n},$$
where the $(n+1)$ order lower Hessenberg determinant $w_n$ is given by
\begin{equation*}
\begin{vmatrix}
1&\frac{1}{x}&0&0&\cdots&0&0\\
0&-\frac{1+x}{x}&\binom{1}{1}\frac{1}{x}&0&\cdots&0&0\\
0&\frac{1}{x}(1-x^2)&-\binom{2}{1}\frac{1+x}{x}&\binom{2}{2}\frac{1}{x}&\cdots&0&0\\
\vdots&\vdots&\vdots&\vdots&\ddots&\vdots&\vdots\\
0&\widetilde{t}_{n-1}(x)&\binom{n-1}{1}\widetilde{t}_{n-2}(x)&\binom{n-1}{2}\widetilde{t}_{n-3}(x)&\cdots&-\binom{n-1}{n-2}\frac{1+x}{x}&\binom{n-1}{n-1}\frac{1}{x}\\
0&\widetilde{t}_n(x)&\binom{n}{1}\widetilde{t}_{n-1}(x)&\binom{n}{2}\widetilde{t}_{n-2}(x)&\cdots&\binom{n}{n-2}\frac{1}{x}(1-x^2)&-\binom{n}{n-1}\frac{1+x}{x}
    \end{vmatrix}.
  \end{equation*}
Expanding along the first column of $w_n$, we see that
\begin{equation*}
w_n=\begin{vmatrix}
-\frac{1+x}{x}&\binom{1}{1}\frac{1}{x}&0&\cdots&0&0\\
\frac{1}{x}(1-x^2)&-\binom{2}{1}\frac{1+x}{x}&\binom{2}{2}\frac{1}{x}&\cdots&0&0\\
\vdots&\vdots&\vdots&\ddots&\vdots&\vdots\\
\widetilde{t}_{n-1}(x)&\binom{n-1}{1}\widetilde{t}_{n-2}(x)&\binom{n-1}{2}\widetilde{t}_{n-3}(x)&\cdots&-\binom{n-1}{n-2}\frac{1+x}{x}&\binom{n-1}{n-1}\frac{1}{x}\\
\widetilde{t}_n(x)&\binom{n}{1}\widetilde{t}_{n-1}(x)&\binom{n}{2}\widetilde{t}_{n-2}(x)&\cdots&\binom{n}{n-2}\frac{1}{x}(1-x^2)&-\binom{n}{n-1}\frac{1+x}{x}
    \end{vmatrix}.
  \end{equation*}
Multiplying each column of the above determint by $-x$, we see that
\begin{equation*}
(-1)^nx^nw_n=\begin{vmatrix}
1+x&-1&0&\cdots&0&0\\
-(1-x^2)&\binom{2}{1}(1+x)&-1&\cdots&0&0\\
\vdots&\vdots&\vdots&\ddots&\vdots&\vdots\\
{t}_{n-1}(x)&\binom{n-1}{1}{t}_{n-2}(x)&\binom{n-1}{2}{t}_{n-3}(x)&\cdots&\binom{n-1}{n-2}(1+x)&-1\\
{t}_n(x)&\binom{n}{1}{t}_{n-1}(x)&\binom{n}{2}{t}_{n-2}(x)&\cdots&-\binom{n}{n-2}(1-x^2)&\binom{n}{n-1}(1+x)
    \end{vmatrix},
  \end{equation*}
where
$${t}_n(x)=\left\{
                       \begin{array}{ll}
                         (1+x)(1-x^2)^{k-1}, & \hbox{if $n=2k-1$;} \\
                         -(1-x^2)^k, & \hbox{if $n=2k$.}
                       \end{array}
                     \right.
$$
Multiplying the first column by $\frac{x}{1+x}$ leads to the stated formula.
\end{proof}

When $n=4$, Theorem~\ref{thmT} says that
$$\frac{1}{x}T_4(x)=\begin{vmatrix}
1&-1&0&0\\
-(1-x)&\binom{2}{1}(1+x)&-1&0\\
1-x^2&-\binom{3}{1}(1-x^2)&\binom{3}{2}(1+x)&-1\\
-(1-x)(1-x^2)&\binom{4}{1}(1+x)(1-x^2)&-\binom{4}{2}(1-x^2)&\binom{4}{3}(1+x)
    \end{vmatrix}.$$
Combining~\eqref{Hn-recu} and Theorem~\ref{thmT}, we get the following.
\begin{corollary}
For any $n\geqslant 1$, let $${t}_n(x)=\left\{
                       \begin{array}{ll}
                         (1+x)(1-x^2)^{k-1}, & \hbox{if $n=2k-1$;} \\
                         -(1-x^2)^k, & \hbox{if $n=2k$.}
                       \end{array}
                     \right.
$$
Then we have
$$T_n(x)=\frac{x}{1+x}t_n(x)+\sum_{r=2}^n\binom{n}{r-1}t_{n-r+1}T_{r-1}(x),$$
which implies that the multiplicity of $-1$ in $T_n(x)$ is $\lrf{\frac{n-1}{2}}$.
\end{corollary}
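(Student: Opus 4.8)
The plan is to read the claimed recurrence directly off the cofactor expansion~\eqref{Hn-recu} applied to the determinant in Theorem~\ref{thmT}, and then to extract the multiplicity of $-1$ from that recurrence by an induction on the $(1+x)$-adic valuation. First I would identify the entries of the Hessenberg matrix $H_n$ of Theorem~\ref{thmT}, for which $\det H_n=T_n(x)$. Its super-diagonal is $h_{j,j+1}=-1$, its first column is $h_{r,1}=\frac{x}{1+x}t_r(x)$, and the remaining last-row entries are $h_{n,r}=\binom{n}{r-1}t_{n-r+1}(x)$ for $2\leqslant r\leqslant n$, where the diagonal entry $h_{n,n}=\binom{n}{n-1}(1+x)=\binom{n}{n-1}t_1(x)$ fits the same pattern. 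Feeding these into~\eqref{Hn-recu} with $n+1$ replaced by $n$, the product $\prod_{j=r}^{n-1}h_{j,j+1}=(-1)^{n-r}$ cancels the sign $(-1)^{n-r}$ in~\eqref{Hn-recu}, so every cofactor term survives with a plus sign. With $\det H_{r-1}=T_{r-1}(x)$ and $\det H_0=1$ this gives $T_n(x)=\sum_{r=1}^{n}h_{n,r}T_{r-1}(x)$, and separating the $r=1$ term yields the stated identity. This step is routine once the entries are matched.

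Next I would pin down $(1+x)$-valuations. Factoring the data gives $t_{2k-1}(x)=(1+x)^{k}(1-x)^{k-1}$ and $t_{2k}(x)=-(1+x)^{k}(1-x)^{k}$, so the exact power of $(1+x)$ dividing $t_m(x)$ is $\lrc{m/2}$ (the factor $1-x$ does not vanish at $x=-1$). Hence the lead term $\frac{x}{1+x}t_n(x)$ has valuation exactly $\lrf{(n-1)/2}=:v_n$. For the summands, assuming inductively that $(1+x)^{v_{r-1}}\mid T_{r-1}(x)$, the $r$-th term has valuation at least $\lrc{(n-r+1)/2}+v_{r-1}=\lrf{(n-r)/2}+\lrf{r/2}$, and the elementary floor identity shows this equals $\lrf{n/2}\geqslant v_n$ unless $n$ is even and $r$ odd, in which case it equals $v_n$ exactly. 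Either way each term is divisible by $(1+x)^{v_n}$, so $(1+x)^{v_n}\mid T_n(x)$; this establishes the lower bound on the multiplicity.

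The genuinely delicate point, which I expect to be the main obstacle, is exactness: that the multiplicity is not larger than $v_n$. The lead term contributes a nonzero multiple of $(1+x)^{v_n}$ (its coefficient at $x=-1$ is $\pm 2^{\lrf{n/2}}$), but when $n$ is odd, and partly when $n$ is even through the $r$-odd summands, several other terms also contribute at order $(1+x)^{v_n}$, and one must rule out a total cancellation. To control this I would pass to the exact generating function~\eqref{ChowMa}. Writing $\omega=2z\sqrt{x^2-1}$ and using that $\cos\omega$ and $\sqrt{x^2-1}\sin\omega=2z(x^2-1)\tfrac{\sin\omega}{\omega}$ are analytic in $s=1+x$ order by order in $z$, the denominator factors as $x-\cos\omega-\sqrt{x^2-1}\sin\omega=(x-1)\Phi(s,z)$ with $\Phi$ analytic and $\Phi(0,z)=1$, whence, as formal power series in $z$,
\[
T(x,z)=\frac{1}{1+x}\bigl(1+x\,\Phi(s,z)^{-1/2}\bigr).
\]
Expanding $\Phi^{-1/2}=\sum_{j\geqslant0}\psi_j(z)s^{j}$ gives $T=(1-\psi_1)+\sum_{i\geqslant1}(\psi_i-\psi_{i+1})s^{i}$, so the claim becomes: for each $n$ the least $i$ with $[z^n](\psi_i-\psi_{i+1})\neq0$ is exactly $v_n$.

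The hard part is the bookkeeping of the $z$-valuations of the $\psi_j$: the $z$-free parts of consecutive coefficients cancel in the differences $\psi_i-\psi_{i+1}$, and it is precisely this cancellation that must be quantified to show the leading coefficient is genuinely nonzero rather than accidentally zero. I would either carry out this valuation analysis directly on $\Phi^{-1/2}$, or, as an alternative, run the local expansion of the differential recurrence $T_{n+1}(x)=(2nx^2+3x-1)T_n(x)+2x(1-x^2)T_n'(x)$ at $x=-1$: writing $T_n=(1+x)^{v_n}g_n$, the indicial factor $2(n-2)-4v_n$ equals $-2$ for $n$ odd (so $g_{n}(-1)\neq0$ propagates cleanly with $g_{n+1}(-1)=-2g_n(-1)$), but vanishes exactly when $n$ is even, forcing one to track the subleading $(1+x)$-jet of $T_n$ and thereby coupling $g_n(-1)$ with $g_n'(-1)$. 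Controlling this coupled jet and proving $g_n(-1)\neq0$ for all $n$ is the crux of the exactness statement; the recurrence and the divisibility bound are, by contrast, straightforward.
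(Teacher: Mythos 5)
Your derivation of the recurrence is exactly the paper's argument, which consists of a single sentence: identify the entries of the Hessenberg matrix in Theorem~\ref{thmT} (first column $h_{r,1}=\tfrac{x}{1+x}t_r(x)$, superdiagonal $-1$, last row $h_{n,r}=\binom{n}{r-1}t_{n-r+1}(x)$), note that $\prod_{j=r}^{n-1}h_{j,j+1}=(-1)^{n-r}$ cancels the sign in~\eqref{Hn-recu}, and read off $T_n(x)=\sum_{r=1}^n h_{n,r}T_{r-1}(x)$ with $\det H_0=1$. That part of your proposal is correct and is all the authors actually wrote. Where you go beyond the paper is in the multiplicity claim. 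Your valuation bookkeeping --- $t_m$ has $(1+x)$-valuation $\lrc{m/2}$, the lead term $\tfrac{x}{1+x}t_n(x)$ has valuation exactly $\lrf{(n-1)/2}$, and each summand has valuation at least $\lrf{(n-r)/2}+\lrf{r/2}\geqslant\lrf{(n-1)/2}$ --- is a correct and complete proof that $(1+x)^{\lrf{(n-1)/2}}$ divides $T_n(x)$, which is the analogue of what the paper proves (as divisibility only) in its other corollaries.

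You are also right that \emph{exactness} of the multiplicity does not follow from the recurrence alone: for $n$ odd the lead term and every summand can contribute at order $(1+x)^{v_n}$, and in the differential recurrence the indicial coefficient $2(n-2)-4v_n$ vanishes precisely when $n$ is even, forcing one to track the next jet. Your proposal leaves this crux unresolved --- but so does the paper, which passes from the recurrence to the exact multiplicity with the single word ``implies'' and no supporting argument. So, measured against the paper's own proof, your first two paragraphs already contain everything the authors establish, and your third paragraph is best read as a correct diagnosis of a gap in the published argument (one that your generating-function factorization $T=\tfrac{1}{1+x}(1+x\Phi^{-1/2})$ or the jet analysis at $x=-1$ would be a reasonable way to close) rather than as a defect peculiar to your approach.
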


We now give a deep connection between the up-down run polynomials $S_n(x)$ and the type $B$ alternating run polynomials $T_n(x)$.
\begin{theorem}\label{TS}
For any $n\geqslant 1$, let $${t}_n(x)=\left\{
                       \begin{array}{ll}
                         (1+x)(1-x^2)^{k-1}, & \hbox{if $n=2k-1$;} \\
                         -(1-x^2)^k, & \hbox{if $n=2k$.}
                       \end{array}
                     \right.
$$
Then $T_{n+1}(x)$ is expressible as the following
lower Hessenberg determinant of order $n+1$:
\begin{equation*}
\begin{vmatrix}
x&-1&0&0&\cdots&0&0\\
2xS_1(x)&{1+x}&-\binom{1}{1}&0&\cdots&0&0\\
2^2xS_2(x)&-(1-x^2)&\binom{2}{1}(1+x)&-\binom{2}{2}&\cdots&0&0\\
\vdots&\vdots&\vdots&\vdots&\ddots&\vdots&\vdots\\
2^{n-1}xS_{n-1}(x)&{t}_{n-1}(x)&\binom{n-1}{1}{t}_{n-2}(x)&\binom{n-1}{2}{t}_{n-3}(x)&\cdots&\binom{n-1}{n-2}(1+x)&-\binom{n-1}{n-1}\\
2^{n}xS_n(x)&{t}_n(x)&\binom{n}{1}{t}_{n-1}(x)&\binom{n}{2}{t}_{n-2}(x)&\cdots&-\binom{n}{n-2}(1-x^2)&\binom{n}{n-1}(1+x)
    \end{vmatrix}.
  \end{equation*}
\end{theorem}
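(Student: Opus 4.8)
The plan is to apply Lemma~\ref{lemma2} to the grammar $G=\{a\rightarrow a(b+c),\ b\rightarrow 2bc,\ c\rightarrow 2b^2\}$ of Lemma~\ref{lemma5}, but with a cleverer numerator than in Theorem~\ref{thmT}. Since $D_G(a)=a(b+c)$, I would set $u=b+c$ and $v=\tfrac1a$, so that $\tfrac{u}{v}=a(b+c)=D_G(a)$ and hence $D_G^n\!\left(\tfrac{u}{v}\right)=D_G^{n+1}(a)$, which by~\eqref{derivapoly-6} evaluates to $(1+x)T_{n+1}(x)$ at $a=b=x,\ c=1$. The columns $2,\dots,n+1$ of the resulting Hessenberg determinant depend only on $v=\tfrac1a$, whose $G$-derivatives were already computed in the proof of Theorem~\ref{thmT}; they reproduce verbatim the $\widetilde t_m(x)$-entries there. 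Thus the entire novel content of the theorem is concentrated in the new first column $D_G^k(u)=D_G^k(b+c)$.

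The heart of the argument is therefore to show that $D_G^k(b+c)\big|_{a=b=x,c=1}=2^k(1+x)S_k(x)$. First I would observe that $b+c\in\mathbb{Q}[b,c]$ and that the polynomial algebra $\mathbb{Q}[b,c]$ is stable under both $G$ and the grammar $G'=\{a\rightarrow ab,\ b\rightarrow bc,\ c\rightarrow b^2\}$ of Lemma~\ref{lemma3}. Since $D_G(b)=2bc=2D_{G'}(b)$ and $D_G(c)=2b^2=2D_{G'}(c)$, and a formal derivative is determined on a polynomial subalgebra by its values on the generators, the two derivations satisfy $D_G=2D_{G'}$ on $\mathbb{Q}[b,c]$; consequently $D_G^k(b+c)=2^kD_{G'}^k(b+c)$. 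Next I would note that $g:=\tfrac{b+c}{a}$ is a $D_{G'}$-constant: a direct computation gives $D_{G'}(b+c)=b(b+c)$ and $D_{G'}(a^{-1})=-b/a$, whence $D_{G'}(g)=0$. Because $b+c=ag$ with $g$ constant, $D_{G'}^k(b+c)=g\,D_{G'}^k(a)=\tfrac{b+c}{a}\cdot ac^kS_k\!\left(\tfrac{b}{c}\right)=(b+c)c^kS_k\!\left(\tfrac{b}{c}\right)$ by~\eqref{derivapoly-4}. Evaluating at $a=b=x,\ c=1$ yields $D_{G'}^k(b+c)=(1+x)S_k(x)$, and therefore $D_G^k(b+c)\big|_{a=b=x,c=1}=2^k(1+x)S_k(x)$, as claimed.

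With both columns understood, I would invoke Lemma~\ref{lemma2} to write $(1+x)T_{n+1}(x)=(-1)^n x^{n+1}w_n$, where $w_n$ is the order-$(n+1)$ Hessenberg determinant whose first column is $2^k(1+x)S_k(x)$ and whose remaining columns coincide with those of the determinant denoted $w_n$ in the proof of Theorem~\ref{thmT}. The final step is pure column bookkeeping: multiplying each of the last $n$ columns by $-x$ converts the $\widetilde t_m$ into the advertised $t_m(x)$ (and turns the superdiagonal $\tfrac1x$ entries into $-1$), exactly as in Theorem~\ref{thmT}, while multiplying the first column by $\tfrac{x}{1+x}$ turns $2^k(1+x)S_k(x)$ into $2^kxS_k(x)$. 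These operations scale the determinant by $(-1)^n\tfrac{x^{n+1}}{1+x}$, which cancels the prefactor $(-1)^nx^{n+1}$ from Lemma~\ref{lemma2} together with the factor $(1+x)$ coming from~\eqref{derivapoly-6}, leaving precisely $T_{n+1}(x)$.

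I expect the only genuine obstacle to be the first-column identity $D_G^k(b+c)=2^k(1+x)S_k(x)$; once the rescaling $D_G=2D_{G'}$ on $\mathbb{Q}[b,c]$ and the $D_{G'}$-invariance of $(b+c)/a$ are in hand, everything else is the determinant manipulation already performed for Theorem~\ref{thmT}. As a sanity check one verifies $n=1$ and $n=2$ directly: the $2\times2$ determinant gives $x(1+x)+2x^2=x+3x^2=T_2(x)$, and the $3\times3$ determinant expands to $x+12x^2+11x^3=T_3(x)$, in agreement with the listed values.
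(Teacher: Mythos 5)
Your proof is correct, and its overall skeleton --- the choice $u=b+c$, $v=\tfrac{1}{a}$ for the grammar of Lemma~\ref{lemma5}, the application of Lemma~\ref{lemma2}, and the final column rescalings by $-x$ and $\tfrac{x}{1+x}$ --- coincides with the paper's. Where you genuinely diverge is in the computation of the first column. The paper proves $D_G^k(b+c)=2^{k-1}(b+c)^2c^{k-1}R_k\!\left(\tfrac{b}{c}\right)$ by induction on $k$, extracting coefficients and matching them against Andr\'e's recurrence~\eqref{Rnk-recurrence}, and only at the end converts $2^{k-1}x(1+x)^2R_k(x)$ into $2^k(1+x)S_k(x)$ via B\'ona's identity~\eqref{SnxRnx}. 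You instead observe that $D_G$ restricted to the stable subalgebra $\mathbb{Q}[b,c]$ equals $2D_{G'}$ for the grammar $G'$ of Lemma~\ref{lemma3}, and that $(b+c)/a$ is a first integral of $D_{G'}$, so that $D_{G'}^k(b+c)=\tfrac{b+c}{a}D_{G'}^k(a)=(b+c)c^kS_k\!\left(\tfrac{b}{c}\right)$ falls out of~\eqref{derivapoly-4} with no induction at all; both of your auxiliary claims check out by direct computation. Your route is shorter and lands directly on $S_k$, bypassing both the coefficient-extraction induction and the appeal to~\eqref{SnxRnx} (which only holds for $k\geqslant 2$, a boundary case both arguments handle correctly). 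The paper's route has the side benefit of exhibiting the intermediate identity in terms of $R_k$, which is what makes the second line of the recursion~\eqref{TSR} immediate; under your argument that second line would still require~\eqref{SnxRnx} as a separate step.
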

\begin{proof}
Recall Lemma~\ref{lemma5}.
It follows from~\eqref{DGaT} that
\begin{equation}\label{DGaT02}
D_G^{n+1}(a)=a(b+c)\sum_{k=1}^{n+1}T(n+1,k)b^{k-1}c^{n+1-k}.
\end{equation}
On the other hand,
$$D_G^{n+1}(a)=D_G^n(a(b+c))=D_G^n\left(\frac{b+c}{{1}/{a}}\right).$$
Let $u(a,b,c)=b+c$ and $v(a,b,c)=\frac{1}{a}$. For any $k\geqslant 1$,
the expansion of $D_G^k(v)$ is given by~\eqref{DG2kv}. When $u=b+c$, then $D_G(u)=2b(b+c)$ and for $k\geqslant 2$,
we observe that
\begin{equation}\label{DNGu}
D_G^k(u)=2^{k-1}(b+c)^2c^{k-1}R_k\left(\frac{b}{c}\right)=2^{k-1}(b+c)^2\sum_{i=1}^{k-1}R(k,i)b^ic^{k-1-i},
\end{equation}
where $R(k,i)$ is defined by~\eqref{Rnk-recurrence}, i.e., the number of permutations in $\ms_k$ with $i$ alternating runs.

We now prove~\eqref{DNGu} by induction. Clearly, $D_G^2(u)=2(b+c)^22b$ and $D_G^3(u)=4(b+c)^2(2bc+4b^2)$ and
$D_G^4(u)=8(b+c)^2(2bc^2+12b^2c+10b^3)$. Thus~\eqref{DNGu} holds for $k\leqslant 4$. Assume it holds for $k=n$. Then
\begin{align*}
D_G^{n+1}(u)&=D_G\left(2^{n-1}(b+c)^2\sum_{k=1}^nR(n,k)b^kc^{n-1-k}\right)\\
&=2^{n-1}D_G\left((b+c)^2\sum_{k=1}^nR(n,k)b^kc^{n-1-k}\right)\\
&=2^{n-1}\left(4b(b+c)^2\sum_{k=1}^nR(n,k)b^kc^{n-1-k}\right)+\\
&2^{n-1}(b+c)^2\sum_{k}R(n,k)\left(2kb^{k}c^{n-k}+2(n-1-k)b^{k+2}c^{n-2-k}\right)
\end{align*}
Exacting the coefficient of $2^{n}(b+c)^2b^kc^{n-k}$, we get
$$2R(n,k-1)+kR(n,k)+(n-k+1)R(n,k-2)=R(n+1,k),$$
as desired. Thus~\eqref{DNGu} holds for $k=n+1$.
When $b=x$ and $c=1$, then $u|_{b=x,c=1}=1+x$, $D_G(u)|_{b=x,c=1}=2x(1+x)$ and~\eqref{DNGu} reduces to
\begin{equation*}
D_G^n(u)|_{b=x,c=1}=2^{n-1}(1+x)^2R_n(x).
\end{equation*}

When $a=b=x$ and $c=1$, applying Lemma~\ref{lemma2} and~\eqref{DGaT02}, we see that
$(1+x)T_{n+1}(x)=(-1)^nx^{n+1}w_n$, where $w_n$ is given by
\begin{equation*}
\begin{vmatrix}
1+x&\frac{1}{x}&0&0&\cdots&0&0\\
2x(1+x)&-\frac{1+x}{x}&\binom{1}{1}\frac{1}{x}&0&\cdots&0&0\\
2(1+x)^2R_2(x)&\frac{1}{x}(1-x^2)&-\binom{2}{1}\frac{1+x}{x}&\binom{2}{2}\frac{1}{x}&\cdots&0&0\\
\vdots&\vdots&\vdots&\vdots&\ddots&\vdots&\vdots\\
2^{n-2}(1+x)^2R_{n-1}(x)&\widetilde{t}_{n-1}(x)&\binom{n-1}{1}\widetilde{t}_{n-2}(x)&\binom{n-1}{2}\widetilde{t}_{n-3}(x)&\cdots&-\binom{n-1}{n-2}\frac{1+x}{x}&\binom{n-1}{n-1}\frac{1}{x}\\
2^{n-1}(1+x)^2R_n(x)&\widetilde{t}_n(x)&\binom{n}{1}\widetilde{t}_{n-1}(x)&\binom{n}{2}\widetilde{t}_{n-2}(x)&\cdots&\binom{n}{n-2}\frac{1}{x}(1-x^2)&-\binom{n}{n-1}\frac{1+x}{x}
    \end{vmatrix},
  \end{equation*}
where
$$\widetilde{t}_n(x)=\left\{
                       \begin{array}{ll}
                         -\frac{1+x}{x}(1-x^2)^{k-1}, & \hbox{if $n=2k-1$;} \\
                         \frac{1}{x}(1-x^2)^k, & \hbox{if $n=2k$.}
                       \end{array}
                     \right.
$$
Multiplying the first column of $w_n$ by $\frac{x}{1+x}$, multiplying each of the other columns by $-x$, and applying~\eqref{SnxRnx},
we get the desired result. This completes the proof.
\end{proof}

By~\eqref{Hn-recu} and Theorem~\ref{TS}, we find the following result. The proof is omitted for simplicity.
\begin{corollary}
For $n\geqslant 1$, let
$${t}_n(x)=\left\{
                       \begin{array}{ll}
                         (1+x)(1-x^2)^{k-1}, & \hbox{if $n=2k-1$;} \\
                         -(1-x^2)^k, & \hbox{if $n=2k$.}
                       \end{array}
                     \right.
$$
Then
the type $B$ alternating run polynomials satisfy the following recursion:
\begin{equation}\label{TSR}
\begin{aligned}
T_{n+1}(x)&=2^nxS_n(x)+\sum_{r=2}^{n+1}\binom{n}{r-2}t_{n-r+2}T_{r-1}(x)\\
&=2^{n-1}x(1+x)R_n(x)+\sum_{r=2}^{n+1}\binom{n}{r-2}t_{n-r+2}T_{r-1}(x),
\end{aligned}
\end{equation}
where $S_n(x)$ are the up-down polynomials and $R_n(x)$ are the type $A$ alternating run polynomials.
\end{corollary}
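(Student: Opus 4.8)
The plan is to extract the entries of the order-$(n+1)$ Hessenberg matrix $H_{n+1}$ of Theorem~\ref{TS} and substitute them into the expansion~\eqref{Hn-recu} of Cahill et al. First I would record the relevant data: in the last row one has $h_{n+1,1}=2^nxS_n(x)$, the diagonal entry $h_{n+1,n+1}=\binom{n}{n-1}t_1(x)=n(1+x)$, and $h_{n+1,r}=\binom{n}{r-2}t_{n-r+2}(x)$ for $2\leqslant r\leqslant n$; moreover every superdiagonal entry is $h_{j,j+1}=-\binom{j-1}{j-1}=-1$. A crucial preliminary observation is that none of the entries of a given row depends on the ambient size, so the $m\times m$ leading principal minor of $H_{n+1}$ is exactly the Theorem~\ref{TS} matrix with parameter $m-1$; consequently $\det H_m=T_m(x)$ for every $m$, under the convention $\det H_0=T_0(x)=1$.

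Then I would feed this into~\eqref{Hn-recu}. The telescoping product of superdiagonal entries is $\prod_{j=r}^{n}h_{j,j+1}=(-1)^{n-r+1}$, which cancels against the sign $(-1)^{n+1-r}$ in~\eqref{Hn-recu}, leaving $+1$ in every summand. Since $\det H_{r-1}=T_{r-1}(x)$ and the $r=1$ term contributes $h_{n+1,1}T_0(x)=2^nxS_n(x)$, this yields
\begin{equation*}
T_{n+1}(x)=n(1+x)T_n(x)+2^nxS_n(x)+\sum_{r=2}^{n}\binom{n}{r-2}t_{n-r+2}(x)T_{r-1}(x).
\end{equation*}

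The final step is a repackaging. The isolated diagonal term $n(1+x)T_n(x)$ is precisely the $r=n+1$ summand of the target sum, because $\binom{n}{n-1}t_1(x)T_n(x)=n(1+x)T_n(x)$; absorbing it extends the summation to $r=n+1$ and removes the standalone term, producing the first stated identity. The second identity is then immediate from B\'ona's relation~\eqref{SnxRnx}, since $S_n(x)=\frac12(1+x)R_n(x)$ converts $2^nxS_n(x)$ into $2^{n-1}x(1+x)R_n(x)$.

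I do not expect a genuine obstacle---this is exactly why the authors omit the proof---but two points require care: confirming that every superdiagonal entry collapses to $-1$, so that the alternating signs of~\eqref{Hn-recu} cancel uniformly across all summands, and recognizing that the standalone $h_{n+1,n+1}\det H_n$ term coincides with the boundary term $r=n+1$ of the extended sum, which is what makes the compact form of the corollary possible.
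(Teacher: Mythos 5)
Your proposal is correct and is exactly the argument the paper intends: the corollary is obtained by applying the Cahill et al.\ recursion \eqref{Hn-recu} to the Hessenberg determinant of Theorem~\ref{TS}, noting that all superdiagonal entries equal $-1$ so the signs cancel, that the leading principal minors are $T_m(x)$, and that the diagonal term $n(1+x)T_n(x)$ is the $r=n+1$ summand; the paper simply omits these details. The only caveat, inherited from the paper's own statement rather than introduced by you, is that the conversion $2^nxS_n(x)=2^{n-1}x(1+x)R_n(x)$ via \eqref{SnxRnx} is only valid for $n\geqslant 2$, so the second displayed form fails at $n=1$.
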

\section{On the alternating run polynomials over dual Stirling permutations}\label{dualStirling}
For a signed permutation $\sigma\in\mbn$, let
$$\operatorname{des}_B(\sigma)=\#\{i\in\{0,1,2,\ldots,n-1\}:~\sigma(i)>\sigma({i+1}),~\sigma(0)=0\}$$
be the number of {\it descents} of $\sigma$.
The {\it type $B$ Eulerian polynomials} are defined by
$$B_n(x)=\sum_{\sigma\in \mbn}x^{\operatorname{des}_B(\sigma)}.$$
They satisfy the recursion
$$B_{n+1}(x)=(2nx+1+x)B_{n}(x)+2x(1-x)\frac{\mathrm{d}}{\mathrm{d}x}B_{n}(x),$$
with $B_0(x)=1,~B_1(x)=1+x$ and $B_2(x)=1+6x+x^2$.
Let $\Stirling{n}{k}$ be the Stirling number of the second kind, i.e., the number of ways to partition $[n]$ into $k$ blocks.
Brenti~\cite[Theorem~3.14]{Bre94} found an explicit formula:
\begin{equation*}\label{Frobenius02}
B_n(x)=\sum_{k=0}^nk!\sum_{i=k}^n\binom{n}{i}2^i\Stirling{i}{k}(x-1)^{n-k}.
\end{equation*}

Let $r(x)=\sqrt{\frac{1+x}{1-x}}$. Following~\cite[p.~14]{Ma20},
the {\it alternating run polynomials of dual Stirling permutations} can be defined by
\begin{align*}
&\left(x\frac{\mathrm{d}}{\mathrm{d}x}\right)^{n}r(x)=\frac{r(x)F_{n}(x)}{(1-x^2)^{n}}.
\end{align*}
They satisfy the recursion
\begin{equation*}
F_{n+1}(x)=(x+2nx^2)F_n(x)+x(1-x^2)\frac{\mathrm{d}}{\mathrm{d}x}F_n(x),
\end{equation*}
with $F_0(x)=1$
The first few alternating run polynomials of dual Stirling permutations are
\begin{align*}
  F_1(x)&=x, \\
  F_2(x)&=x+x^2+x^3, \\
  F_3(x)&=x+3x^2+7x^3+3x^4+x^5,\\
  F_4(x)&=x+7x^2+29x^3+31x^4+29x^5+7x^6+x^7.
\end{align*}

Stirling permutations were introduced by Gessel and Stanley~\cite{Gessel78} and have been extensively studied in recent years~\cite{Ma23}.
A {\it Stirling permutation} of order $n$ is a permutation of the multiset $\{1,1,2,2,\ldots,n,n\}$ such that
for each $i$, $1\leq i\leq n$, all entries between the two occurrences of $i$ are larger than $i$.
Denote by $\mqn$ the set of {\it Stirling permutations} of order $n$.
Let $\sigma=\sigma_1\sigma_2\cdots\sigma_{2n}\in\mqn$.
Let $\Phi$ be the injection which maps each first occurrence of entry $j$ in $\sigma$ to $2j$ and the
second occurrence of $j$ to $2j-1$,
where $j\in [n]$. For example, $\Phi(221331)=432651$.
Let $\Phi(\mqn)=\{\pi\mid \sigma\in\mqn, \Phi(\sigma)=\pi\}$
be the set of {\it dual Stirling permutations} of order $n$.
A main result in~\cite{Mawang16} says that
$$F_n(x)=\sum_{\pi\in\Phi(\mqn)}x^{\run(\pi)}.$$
Let $F(x;z)=\sum_{n=0}^\infty F_n(x)\frac{z^n}{n!}$. According to~\cite[Eq.~(3.7)]{Mawang16}, we have
\begin{equation*}\label{Txz-EGF}
F(x;z)=\frac{{e^{z \left( x-1 \right)  \left( x+1 \right) }}+x}{1+x}\sqrt {{\frac {1-x^2}{{e^{2\,z \left( x-1
\right) \left( x+1 \right) }}-x^2}}}.
\end{equation*}

An occurrence of an {\it ascent-plateau} of $\sigma\in\mqn$ is an index $i$ such that $\sigma_{i-1}<\sigma_{i}=\sigma_{i+1}$, where $i\in\{2,3,\ldots,2n-1\}$.
Let $\ap(\sigma)$ be the number of ascent-plateaus of $\sigma$.
The number of {\it flag ascent-plateaus} of $\sigma$ is defined by $$\fap(\sigma)=\left\{
               \begin{array}{ll}
                 2\ap(\sigma)+1, & \hbox{if $\sigma_1=\sigma_2$;} \\
                 2\ap(\sigma), & \hbox{otherwise.}
               \end{array}
             \right.
$$
It is easy to check that $\fap(\sigma)=\altrun( \Phi(\sigma))$ for any $\sigma\in\mqn$.
Hence another interpretation of $F_n(x)$ is given as follows: $$F_n(x)=\sum_{\sigma\in\mqn}x^{\fap(\sigma)}.$$

A unified grammatical interpretations of $B_n(x)$ and $F_n(x)$ is given as follows.
\begin{lemma}[\cite{Ma131,Mawang16,Ma20}]\label{Gabc}
If $G=\{a\rightarrow abc, b\rightarrow bc^2, c\rightarrow b^2c\}$,
then we have
\begin{equation*}\label{DnxFnk}
D_G^n(bc)=bc^{2n+1}B_n\left(\frac{b^2}{c^2}\right),~D_{G}^n(a)=ac^{2n}F_n\left(\frac{b}{c}\right).
\end{equation*}
\end{lemma}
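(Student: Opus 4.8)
The plan is to establish both identities by induction on $n$, driving the induction with the two recursions for $B_n(x)$ and $F_n(x)$ recorded earlier in this section. The base case $n=0$ is immediate: since $B_0(x)=F_0(x)=1$, the right-hand sides reduce to $bc$ and $a$, matching $D_G^0(bc)=bc$ and $D_G^0(a)=a$.

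The engine of the argument is a pair of auxiliary computations showing that $G=\{a\to abc,\ b\to bc^2,\ c\to b^2c\}$ acts on the two relevant ratios exactly like the differential operators occurring in the recursions. Writing $Y=b/c$ and $X=b^2/c^2$ and using $D_G(c^{-1})=-b^2/c$, I would first record
$$D_G(Y)=\frac{b(c^2-b^2)}{c}=c^2\,Y(1-Y^2),\qquad D_G(X)=\frac{2b^2(c^2-b^2)}{c^2}=2c^2\,X(1-X).$$
Together with $D_G(a)=abc$, $D_G(bc)=bc(b^2+c^2)$, and $D_G(c^{2n})=2n\,b^2c^{2n}$, these are the only ingredients needed.

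For the second identity, assuming $D_G^n(a)=ac^{2n}F_n(Y)$, the product rule gives
$$D_G^{n+1}(a)=D_G(ac^{2n})\,F_n(Y)+ac^{2n}\,F_n'(Y)\,D_G(Y)=ac^{2n}\Big[(bc+2nb^2)F_n(Y)+c^2Y(1-Y^2)F_n'(Y)\Big].$$
Since $bc=c^2Y$ and $b^2=c^2Y^2$, the bracket equals $c^2\big[(Y+2nY^2)F_n(Y)+Y(1-Y^2)F_n'(Y)\big]$, which by the recursion $F_{n+1}(x)=(x+2nx^2)F_n(x)+x(1-x^2)F_n'(x)$ is precisely $c^2F_{n+1}(Y)$, so that $D_G^{n+1}(a)=ac^{2n+2}F_{n+1}(Y)$. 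The first identity runs on the same rails: differentiating $bc^{2n+1}B_n(X)$, the derivative of the power of $c$ supplies the factor $(c^2+(2n+1)b^2)$ while $D_G(X)$ supplies $2c^2X(1-X)B_n'(X)$; dividing the resulting bracket by $c^2$ and substituting $b^2/c^2=X$ reproduces the recursion $B_{n+1}(x)=(2nx+1+x)B_n(x)+2x(1-x)B_n'(x)$, giving $D_G^{n+1}(bc)=bc^{2n+3}B_{n+1}(X)$.

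I do not expect a genuine obstacle here; the whole argument rests on the single observation that $D_G$ sends $b/c$ to $c^2Y(1-Y^2)$ and $b^2/c^2$ to $2c^2X(1-X)$, so the differential terms $x(1-x^2)\frac{\mathrm{d}}{\mathrm{d}x}$ and $2x(1-x)\frac{\mathrm{d}}{\mathrm{d}x}$ of the recursions emerge on their own, while the derivative of the pure $c$-power produces the polynomial multipliers $(x+2nx^2)$ and $(2nx+1+x)$. The one place demanding care is the bookkeeping of the $-b^2/c$ term in $D_G(c^{-1})$ and tracking the homogeneity in $c$ when translating between the ratio variables $X,Y$ and the grammar variables $b,c$.
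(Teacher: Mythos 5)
Your proof is correct. Note that the paper itself gives no proof of this lemma --- it is quoted from \cite{Ma131,Mawang16,Ma20} --- so there is no internal argument to compare against; but your induction is exactly the standard route, and all the computations check out: $D_G(b/c)=b(c^2-b^2)/c$, $D_G(b^2/c^2)=2b^2(c^2-b^2)/c^2$, $D_G(ac^{2n})=ac^{2n}(bc+2nb^2)$ and $D_G(bc^{2n+1})=bc^{2n+1}(c^2+(2n+1)b^2)$ feed precisely into the recursions $F_{n+1}(x)=(x+2nx^2)F_n(x)+x(1-x^2)F_n'(x)$ and $B_{n+1}(x)=(1+(2n+1)x)B_n(x)+2x(1-x)B_n'(x)$ stated in Section 4. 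Your argument is also methodologically consistent with how the paper proves the analogous Lemma~\ref{lemma5}, except that you work at the level of the polynomial recursions rather than extracting coefficients from the triangle recurrence; this is slightly cleaner since it avoids re-deriving the coefficient identities.
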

\begin{theorem}\label{BT}
For any $n\geqslant 1$, the polynomial $F_{n+1}(x)$ can be expressed as the
following lower Hessenberg determinant of order $n+1$:
\begin{equation*}
\begin{small}
\begin{vmatrix}
1&-1&0&0&\cdots&0&0\\
xB_1(x^2)&-F_1(-x)&-1&0&\cdots&0&0\\
xB_2(x^2)&-F_2(-x)&-\binom{2}{1}F_1(-x)&-1&\cdots&0&0\\
\vdots&\vdots&\vdots&\vdots&\ddots&\vdots&\vdots\\
xB_{n-1}(x^2)&-F_{n-1}(-x)&-\binom{n-1}{1}F_{n-2}(-x)&-\binom{n-1}{2}F_{n-3}(-x)&\cdots&-\binom{n-1}{n-2}F_1(-x)&-1\\
xB_n(x^2)&-F_n(-x)&-\binom{n}{1}F_{n-1}(-x)&-\binom{n}{2}F_{n-2}(-x)&\cdots&-\binom{n}{n-2}F_2(-x)&-\binom{n}{n-1}F_1(-x)
    \end{vmatrix}.
\end{small}
  \end{equation*}
\end{theorem}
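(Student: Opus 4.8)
The plan is to follow the same grammatical template used for Theorems~\ref{Andre},~\ref{thmSn} and~\ref{thmT}, now anchored on the grammar $G=\{a\rightarrow abc,\ b\rightarrow bc^2,\ c\rightarrow b^2c\}$ of Lemma~\ref{Gabc}. Since $D_G(a)=abc$, one has $D_G^{n+1}(a)=D_G^n(abc)$, and the whole point is to feed the quotient $abc=\dfrac{bc}{1/a}$ into Lemma~\ref{lemma2} with $u=bc$ and $v=1/a$. The numerator side is immediate from Lemma~\ref{Gabc}: $D_G^k(bc)=bc^{2k+1}B_k\!\left(b^2/c^2\right)$, which after the specialization $b=x,\ c=1$ produces the column of type $B$ Eulerian polynomials $B_k(x^2)$. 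This is precisely the mechanism that creates the advertised connection between the two families.

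First I would pin down the denominator side, which is the crux; I claim
\begin{equation*}
D_G^k\!\left(\frac1a\right)=\frac{c^{2k}}{a}\,F_k\!\left(-\frac bc\right),
\end{equation*}
so that at $b=x,\ c=1$ one gets $D_G^k(1/a)=\tfrac1a F_k(-x)$, the source of the $F_k(-x)$ entries. The sign reversal is not accidental: writing $r(x)=\sqrt{(1+x)/(1-x)}$ as in Section~\ref{dualStirling}, one has $1/r(x)=r(-x)$, and applying the defining relation $\left(x\frac{d}{dx}\right)^k r(x)=r(x)F_k(x)/(1-x^2)^k$ to $r(-x)$ yields $\left(x\frac{d}{dx}\right)^k r(-x)=r(-x)F_k(-x)/(1-x^2)^k$. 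I would either transcribe this duality directly into the grammar, or prove the displayed identity by induction on $k$, starting from $D_G(1/a)=-bc/a$ and using $D_G(b/c)=b(c^2-b^2)/c$ together with the recursion $F_{k+1}(x)=(x+2kx^2)F_k(x)+x(1-x^2)F_k'(x)$ evaluated at $-x$. The induction is routine but is where all the real content sits.

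With both derivative formulas in hand, applying Lemma~\ref{lemma2} gives $D_G^{n+1}(a)=(-1)^n w_n/v^{n+1}=(-1)^n a^{n+1}w_n$, while Lemma~\ref{Gabc} gives $D_G^{n+1}(a)=ac^{2n+2}F_{n+1}(b/c)$; comparing the two expresses $w_n$ in terms of $F_{n+1}(b/c)$. After specializing $b=x,\ c=1$, the first (numerator) column of $w_n$ reads $xB_k(x^2)$, while each of the remaining $n$ columns carries a common factor $1/a$ inherited from $v=1/a$. I would then pull $1/a$ out of each of those $n$ columns (absorbing $a^{-n}$ and cancelling the surviving power of $a$), and finally rescale columns by $-1$ to fix signs. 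Tracking these scalings is the only delicate bookkeeping step, and it is what converts the raw determinant into the displayed lower Hessenberg form with entries assembled from $xB_k(x^2)$ and $F_k(-x)$ times binomial coefficients.

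As a consistency check I would verify the smallest case by hand, $F_2(x)=x+x^2+x^3$, to fix the sign conventions and the column scalings before asserting the general statement; and, exactly as in the corollaries following Theorems~\ref{thmSn}--\ref{TS}, expanding the determinant through~\eqref{Hn-recu} along the last row should reproduce the recursion $F_{n+1}(x)=(x+2nx^2)F_n(x)+x(1-x^2)F_n'(x)$, furnishing an independent confirmation of the formula.
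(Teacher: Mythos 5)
Your proposal follows essentially the same route as the paper's own proof: the same grammar from Lemma~\ref{Gabc}, the same quotient $abc=\frac{bc}{1/a}$ with $u=bc$ and $v=1/a$, the same key inductive identity $D_G^k\!\left(\frac1a\right)=\frac{c^{2k}}{a}F_k\!\left(-\frac{b}{c}\right)$ (which the paper likewise states and leaves to a routine induction), and the same application of Lemma~\ref{lemma2} followed by column rescalings; your observation that the sign reversal reflects $1/r(x)=r(-x)$ is a pleasant motivation the paper does not give. One substantive note: your derivation puts $u|_{a=c=1,b=x}=bc|_{a=c=1,b=x}=x=xB_0(x^2)$ in the $(1,1)$ position, and your proposed check on $F_2(x)=x+x^2+x^3$ confirms that this entry must be $x$ rather than the $1$ printed in the statement, so carrying out your plan would in fact surface a typo in the displayed determinant.
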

\begin{proof}
Consider the grammar $G=\{a\rightarrow abc, b\rightarrow bc^2, c\rightarrow b^2c\}$.
Note that
\begin{equation}\label{abc}
ac^{2n+2}F_{n+1}\left(\frac{b}{c}\right)=D_G^{n+1}(a)=D_G^{n}(abc)=D_G^n\left(\frac{bc}{\frac{1}{a}}\right).
\end{equation}
Let $u(a,b,c)=bc$ and $v(a,b,c)=\frac{1}{a}$. By Lemma~\ref{Gabc}, we have
$D_G^n(u)=bc^{2n+1}B_n\left(\frac{b^2}{c^2}\right)$.
Note that
$$D_G(v)=D_G\left(\frac{1}{a}\right)=\frac{c}{a}(-b),~D_G^2(v)=\frac{c}{a}(-bc^2+b^2c-b^3),$$
$$D_G^3(v)=\frac{c}{a}(-bc^4+3b^2c^3-7b^3c^2+3b^4c-b^5).$$
By induction, it is routine to verify that
$$D_G^n(v)=\frac{c^{2n}}{a}F_n\left(-\frac{b}{c}\right),$$
and we omit the details for simplicity. In the case when $a=c=1$ and $b=x$,
applying~\eqref{abc} and Lemma~\ref{lemma2}, we arrive at $F_{n+1}(x)=(-1)^n{w_n}$,
where $w_n$ is given by
 \begin{equation*}
\begin{vmatrix}
1&1&0&0&\cdots&0&0\\
xB_1(x^2)&F_1(-x)&\binom{1}{1}&0&\cdots&0&0\\
xB_2(x^2)&F_2(-x)&\binom{2}{1}F_1(-x)&\binom{2}{2}&\cdots&0&0\\
\vdots&\vdots&\vdots&\vdots&\ddots&\vdots&\vdots\\
xB_{n-1}(x^2)&F_{n-1}(-x)&\binom{n-1}{1}F_{n-2}(-x)&\binom{n-1}{2}F_{n-3}(-x)&\cdots&\binom{n-1}{n-2}F_1(-x)&\binom{n-1}{n-1}\\
xB_n(x^2)&F_n(-x)&\binom{n}{1}F_{n-1}(-x)&\binom{n}{2}F_{n-2}(-x)&\cdots&\binom{n}{n-2}F_2(-x)&\binom{n}{n-1}F_1(-x)
    \end{vmatrix}.
  \end{equation*}
Except for the first column, multiplying each of the other columns by $-1$ gives the desired result. This completes the proof.
\end{proof}

By~\eqref{Hn-recu} and Theorem~\ref{BT}, we immediately find the following result.
\begin{corollary}
For $n\geqslant 0$, we have
$$F_{n+1}(x)=xB_n(x^2)+nxF_n(x)-\sum_{r=2}^n\binom{n}{r-2}F_{n-r+2}(-x)F_{r-1}(x).$$
\end{corollary}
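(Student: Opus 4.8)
The plan is to apply the Cahill et al.\ recursion~\eqref{Hn-recu} directly to the lower Hessenberg determinant produced in Theorem~\ref{BT}; the whole argument then reduces to reading off the matrix entries and keeping track of signs. Writing $H=H_{n+1}$ for that matrix, I would first record its entries: the first column is $h_{i,1}=xB_{i-1}(x^2)$, the superdiagonal is $h_{i,i+1}=-1$, and the remaining lower-triangular entries are $h_{i,j}=-\binom{i-1}{j-2}F_{i-j+1}(-x)$ for $2\leqslant j\leqslant i$ (all other entries vanishing). The one conceptual point is that none of these entries depends on $n$, so the leading $k\times k$ principal submatrix of $H$ is precisely the order-$k$ matrix of Theorem~\ref{BT}; consequently $\det H_k=F_k(x)$ for every $0\leqslant k\leqslant n+1$, with $\det H_0=1=F_0(x)$ and $\det H_1=xB_0(x^2)=x=F_1(x)$. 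This identification is exactly what lets me replace $\det H_{r-1}$ by $F_{r-1}(x)$ throughout the recursion.

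Next I would evaluate the three ingredients of~\eqref{Hn-recu}. Using $F_1(x)=x$, the corner entry is $h_{n+1,n+1}=-\binom{n}{n-1}F_1(-x)=-n(-x)=nx$, which contributes the term $nxF_n(x)$. Since every superdiagonal entry equals $-1$, the product $\prod_{j=r}^{n}h_{j,j+1}=(-1)^{n-r+1}$, and this cancels the prefactor $(-1)^{n+1-r}$ in~\eqref{Hn-recu}, leaving each summand with a clean coefficient $+1$. The recursion thus collapses to $F_{n+1}(x)=nxF_n(x)+\sum_{r=1}^{n}h_{n+1,r}F_{r-1}(x)$. Splitting this sum at $r=1$, the $r=1$ term uses $h_{n+1,1}=xB_n(x^2)$ and $\det H_0=1$ to give $xB_n(x^2)$, while for $2\leqslant r\leqslant n$ the entry $h_{n+1,r}=-\binom{n}{r-2}F_{n-r+2}(-x)$ yields $-\sum_{r=2}^{n}\binom{n}{r-2}F_{n-r+2}(-x)F_{r-1}(x)$. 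Collecting the three pieces gives the stated identity, and the boundary case $n=0$ is immediate since it reduces to $F_1(x)=xB_0(x^2)=x$.

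I do not expect a genuine obstacle: the proof is a routine specialization of~\eqref{Hn-recu}. The only steps that require care are the sign cancellation between $(-1)^{n+1-r}$ and the superdiagonal product $(-1)^{n-r+1}$, and the verification that the leading principal minors of $H$ are the lower-order polynomials $F_k(x)$ themselves rather than some truncation of them---both of which follow from the $n$-independence of the entries noted in the first step. If anything is delicate, it is purely the indexing of the binomial coefficients and of the subscripts on $B$ and $F$, which I would double-check against the small cases $n=1,2$ computed directly from Theorem~\ref{BT}.
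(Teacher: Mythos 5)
Your proof is correct and is exactly the argument the paper intends: apply the recursion~\eqref{Hn-recu} to the Hessenberg matrix of Theorem~\ref{BT}, identify the leading principal minors with the lower-order polynomials $F_k(x)$, and read off the entries of the last row, with the superdiagonal product cancelling the sign $(-1)^{n+1-r}$. One remark: your reading $h_{1,1}=xB_0(x^2)=x$ is the right one --- the $1$ printed in the top-left corner of the determinant in Theorem~\ref{BT} is a typo (the $n=1$ case only yields $F_2(x)=x+x^2+x^3$ with $h_{1,1}=x$), and your identification $\det H_k=F_k(x)$, hence the corollary, depends on this corrected value.
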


A dual of Theorem~\ref{BT} is given as follows.
\begin{theorem}\label{Ff02}
For any $n\geqslant 0$, let
$${f}_n(x)=\left\{
                             \begin{array}{ll}
                              {(1+x^2)}(x^2-1)^{2k-2}, & \hbox{if $n=2k-1$;} \\
                              -(x^2-1)^{2k}, & \hbox{if $n=2k$.}
                             \end{array}
                           \right.
$$
Then $F_{n+1}(x)$ can be expressed as the
following lower Hessenberg determinant of order $n+1$:
 \begin{equation*}
\begin{vmatrix}
x&-1&0&0&\cdots&0&0\\
xF_1(x)&{f}_1(x)&-\binom{1}{1}&0&\cdots&0&0\\
xF_2(x)&{f}_2(x)&\binom{2}{1}{f}_1(x)&-\binom{2}{2}&\cdots&0&0\\
\vdots&\vdots&\vdots&\vdots&\ddots&\vdots&\vdots\\
xF_{n-1}(x)&{f}_{n-1}(x)&\binom{n-1}{1}{f}_{n-2}(x)&\binom{n-1}{2}{f}_{n-3}(x)&\cdots&\binom{n-1}{n-2}{f}_1(x)&-\binom{n-1}{n-1}\\
xF_n(x)&{f}_n(x)&\binom{n}{1}{f}_{n-1}(x)&\binom{n}{2}{f}_{n-2}(x)&\cdots&\binom{n}{n-2}{f}_2(x)&\binom{n}{n-1}{f}_1(x)
    \end{vmatrix}.
  \end{equation*}

\end{theorem}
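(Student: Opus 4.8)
The plan is to apply Lemma~\ref{lemma2} to the same grammar $G=\{a\rightarrow abc,~b\rightarrow bc^2,~c\rightarrow b^2c\}$ used in Theorem~\ref{BT}, but with the \emph{opposite} factorization of $abc$. Since $D_G(a)=abc$, we again have $D_G^{n+1}(a)=D_G^n(abc)$; however, instead of writing $abc=bc\big/(1/a)$ as in Theorem~\ref{BT}, here I would set $u(a,b,c)=a$ and $v(a,b,c)=\frac{1}{bc}$, so that $\frac{u}{v}=abc$. By Lemma~\ref{Gabc} the first column is then immediate, $D_G^k(u)=D_G^k(a)=ac^{2k}F_k\!\left(\frac{b}{c}\right)$, which specializes at $a=c=1,\,b=x$ to $F_k(x)$.

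The main work is to compute the iterated derivatives $D_G^k(v)$. The key observation is that $b^2-c^2$ is annihilated by $D_G$: indeed $D_G(b^2)=2b\cdot bc^2=2b^2c^2=2c\cdot b^2c=D_G(c^2)$, so $D_G(b^2-c^2)=0$, and hence every power $(b^2-c^2)^m$ is a $D_G$-constant. Starting from
\[
D_G(v)=-\frac{b^2+c^2}{bc},\qquad D_G\!\left(-\frac{b^2+c^2}{bc}\right)=\frac{(b^2-c^2)^2}{bc},
\]
where the second identity uses $D_G\!\left(\frac{b}{c}+\frac{c}{b}\right)=-\frac{(b^2-c^2)^2}{bc}$, a straightforward induction that simply carries the inert factor $(b^2-c^2)^{2k-2}$ or $(b^2-c^2)^{2k}$ along gives
\[
D_G^{2k-1}(v)=-\frac{b^2+c^2}{bc}(b^2-c^2)^{2k-2},\qquad D_G^{2k}(v)=\frac{1}{bc}(b^2-c^2)^{2k}.
\]
Specializing at $a=c=1,\,b=x$ and writing $\widetilde f_n(x)=D_G^n(v)|_{a=c=1,\,b=x}$, one obtains $\widetilde f_{2k-1}=-\frac{1+x^2}{x}(x^2-1)^{2k-2}$ and $\widetilde f_{2k}=\frac{1}{x}(x^2-1)^{2k}$, so that $f_n(x)=-x\,\widetilde f_n(x)$ is exactly the sequence defined in the statement.

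With both columns in hand, Lemma~\ref{lemma2} gives $F_{n+1}(x)=D_G^{n+1}(a)|_{a=c=1,\,b=x}=(-1)^n x^{n+1} w_n$, since $v^{n+1}|_{a=c=1,\,b=x}=x^{-(n+1)}$, where $w_n$ is the order-$(n+1)$ lower Hessenberg determinant whose first column is $1,F_1(x),\ldots,F_n(x)$, whose second column is $\frac1x,\widetilde f_1(x),\ldots,\widetilde f_n(x)$, and whose remaining columns are the binomial-weighted shifts of the $\widetilde f$ prescribed by the lemma. To finish, I would multiply the first column of $w_n$ by $x$ and each of the remaining $n$ columns by $-x$; this scales the determinant by $x\cdot(-x)^n=(-1)^n x^{n+1}$, which cancels the prefactor so the value equals $F_{n+1}(x)$ directly. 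Under this scaling the superdiagonal entries $v=\frac1x$ become $-1$, the second-column entries $\widetilde f_n$ become $f_n=-x\widetilde f_n$, and the first-column entries become $xF_k(x)$, reproducing the displayed matrix.

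I expect the only genuine obstacle to be the induction for $D_G^k(v)$; once the identity $D_G(b^2-c^2)=0$ is noticed the computation collapses to a clean closed form, and the subsequent column-scaling bookkeeping is routine, being of exactly the same type already carried out in the proofs of Theorems~\ref{BT} and~\ref{thmsntn2}. As a sanity check one verifies the orders $n=0,1,2$ directly, recovering $F_1(x)=x$, $F_2(x)=x+x^2+x^3$, and $F_3(x)=x+3x^2+7x^3+3x^4+x^5$.
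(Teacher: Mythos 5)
Your proposal is correct and follows essentially the same route as the paper's own proof: the same factorization $abc=a\big/\tfrac{1}{bc}$ with $u=a$, $v=\tfrac{1}{bc}$, the same key observation $D_G(b^2-c^2)=0$ yielding the closed form for $D_G^k(v)$, and the same column-scaling by $x$ and $-x$ at the end.
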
\label{F02}
\begin{proof}
Consider the grammar $G=\{a\rightarrow abc, b\rightarrow bc^2, c\rightarrow b^2c\}$.
Note that
\begin{equation}\label{abc02}
ac^{2n+2}F_{n+1}\left(\frac{b}{c}\right)=D_G^{n+1}(a)=D_G^{n}(abc)=D_G^n\left(\frac{a}{\frac{1}{bc}}\right).
\end{equation}
Let $u(a,b,c)=a$ and $v(a,b,c)=\frac{1}{bc}$. By Lemma~\ref{Gabc}, we have
$D_G^n(u)=ac^{2n}F_n\left(\frac{b}{c}\right)$. Note that
$$D_G(v)=D_G\left(\frac{1}{bc}\right)=-\frac{1}{bc}(b^2+c^2),~D_G\left(\frac{1}{bc}(b^2+c^2)\right)=-\frac{1}{bc}(b^2-c^2)^2.$$
Since $D_G(b^2-c^2)=0$, it follows that
\begin{equation*}
D_G^{2k-1}\left(\frac{1}{bc}\right)=-\frac{b^2+c^2}{bc}(b^2-c^2)^{2k-2},~D_G^{2k}\left(\frac{1}{bc}\right)=\frac{1}{bc}(b^2-c^2)^{2k}.
\end{equation*}
In the case when $a=c=1$ and $b=x$, we see that
$$D_G^n(u)|_{a=c=1,b=x}=F_n\left(x\right),~D_G^n(v)|_{a=c=1,b=x}=\widetilde{f}_n(x),$$
where $$\widetilde{f}_n(x)=\left\{
                             \begin{array}{ll}
                               -\frac{1+x^2}{x}(x^2-1)^{2k-2}, & \hbox{if $n=2k-1$;} \\
                              \frac{1}{x}(x^2-1)^{2k}, & \hbox{if $n=2k$.}
                             \end{array}
                           \right.
$$
Applying~\eqref{abc02} and Lemma~\ref{lemma2}, we arrive at $F_{n+1}(x)=(-1)^nx^{n+1}{w_n}$,
where $w_n$ is given by
 \begin{equation*}
\begin{vmatrix}
1&\frac{1}{x}&0&0&\cdots&0&0\\
F_1(x)&\widetilde{f}_1(x)&\binom{1}{1}\frac{1}{x}&0&\cdots&0&0\\
F_2(x)&\widetilde{f}_2(x)&\binom{2}{1}\widetilde{f}_1(x)&\binom{2}{2}\frac{1}{x}&\cdots&0&0\\
\vdots&\vdots&\vdots&\vdots&\ddots&\vdots&\vdots\\
F_{n-1}(x)&\widetilde{f}_{n-1}(x)&\binom{n-1}{1}\widetilde{f}_{n-2}(x)&\binom{n-1}{2}\widetilde{f}_{n-3}(x)&\cdots&\binom{n-1}{n-2}\widetilde{f}_1(x)&\binom{n-1}{n-1}\frac{1}{x}\\
F_n(x)&\widetilde{f}_n(x)&\binom{n}{1}\widetilde{f}_{n-1}(x)&\binom{n}{2}\widetilde{f}_{n-2}(x)&\cdots&\binom{n}{n-2}\widetilde{f}_2(x)&\binom{n}{n-1}\widetilde{f}_1(x)
    \end{vmatrix}.
  \end{equation*}
Multiplying the first column by $x$ and multiplying each of the other columns by $-x$, we get the desired result. This completes the proof.
\end{proof}

By~\eqref{Hn-recu} and Theorem~\ref{Ff02}, we arrive at the following result.
\begin{corollary}
For any $n\geqslant 0$, let
$${f}_n(x)=\left\{
                             \begin{array}{ll}
                              {(1+x^2)}(x^2-1)^{2k-2}, & \hbox{if $n=2k-1$;} \\
                              -(x^2-1)^{2k}, & \hbox{if $n=2k$.}
                             \end{array}
                           \right.
$$
Then the alternating run polynomials of dual Stirling permutations satisfy the recursion
$$F_{n+1}(x)=xF_n(x)+\sum_{r=2}^{n+1}\binom{n}{r-2}f_{n-r+2}(x)F_{r-1}(x).$$
\end{corollary}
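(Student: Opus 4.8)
The plan is to obtain this recursion directly from the determinantal representation of $F_{n+1}(x)$ in Theorem~\ref{Ff02} by applying the Hessenberg expansion~\eqref{Hn-recu}, exactly as in the corollaries following the earlier determinant theorems. First I would name the order-$(n+1)$ matrix of Theorem~\ref{Ff02} as $H_{n+1}$, so that $\det H_{n+1}=F_{n+1}(x)$, and simply read off its entries: the first column is $h_{i,1}=xF_{i-1}(x)$ (with $F_0(x)=1$), the superdiagonal is $h_{i,i+1}=-1$, and for $2\leqslant j\leqslant i$ one has $h_{i,j}=\binom{i-1}{j-2}f_{i-j+1}(x)$, where $f_\bullet(x)$ is the given piecewise sequence.

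The crucial structural observation is that these entries are intrinsic to each row: they depend only on the row index $i$, the column index $j$, and the fixed sequences $F_\bullet(x)$ and $f_\bullet(x)$, not on the ambient size $n+1$. Consequently the leading principal submatrix $H_m$ of order $m$ is itself the matrix furnished by Theorem~\ref{Ff02} at index $m$, whence $\det H_m=F_m(x)$ for every $m$, with $\det H_0=1=F_0(x)$ matching the convention $H_0=1$ stated before~\eqref{Hn-recu}. This identification is precisely what allows me to feed the subdeterminants $\det H_{r-1}=F_{r-1}(x)$ and $\det H_n=F_n(x)$ back into the recursion.

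Next I would substitute into~\eqref{Hn-recu}. Since every superdiagonal entry equals $-1$, the product telescopes to $\prod_{j=r}^{n}h_{j,j+1}=(-1)^{n-r+1}$, and combined with the prefactor $(-1)^{n+1-r}$ the signs cancel via $(-1)^{n+1-r}(-1)^{n-r+1}=1$, leaving
$$F_{n+1}(x)=h_{n+1,n+1}F_n(x)+\sum_{r=1}^{n}h_{n+1,r}F_{r-1}(x).$$
Then I would peel off the $r=1$ term, which is $h_{n+1,1}F_0(x)=xF_n(x)$, and reinterpret the diagonal term $h_{n+1,n+1}F_n(x)=\binom{n}{n-1}f_1(x)F_n(x)$ as the $r=n+1$ summand that is missing from $\sum_{r=2}^{n+1}\binom{n}{r-2}f_{n-r+2}(x)F_{r-1}(x)$; the terms with $2\leqslant r\leqslant n$ already read $\binom{n}{r-2}f_{n-r+2}(x)F_{r-1}(x)$ because $h_{n+1,r}=\binom{n}{r-2}f_{n-r+2}(x)$. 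Collecting these pieces yields exactly the stated recursion.

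The computation is entirely routine, so I would flag only the two bookkeeping points that demand care: the sign cancellation $(-1)^{n+1-r}(-1)^{n-r+1}=1$, and the re-indexing that merges the standalone diagonal contribution into the single sum running up to $r=n+1$ while the $r=1$ contribution becomes the leading $xF_n(x)$. The one genuinely substantive step, and the place I would be most careful, is the justification that $\det H_m=F_m(x)$ for all principal submatrices, i.e.\ that truncating the Hessenberg matrix leaves the entries undisturbed; this holds here precisely because the binomial coefficients $\binom{i-1}{j-2}$ are indexed by the row rather than by the matrix order, so no such obstacle actually arises.
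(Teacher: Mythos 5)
Your proposal is correct and is exactly the argument the paper intends: the paper omits the proof, stating only that the corollary follows from the Hessenberg recursion~\eqref{Hn-recu} applied to the determinant of Theorem~\ref{Ff02}, and your identification of the entries $h_{n+1,1}=xF_n(x)$, $h_{n+1,r}=\binom{n}{r-2}f_{n-r+2}(x)$, $h_{j,j+1}=-1$, the sign cancellation, and the check that the leading principal minors equal $F_m(x)$ fills in precisely those details.
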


\section{Concluding remarks}
In this paper, we develop a general method to deduce determinantal representations of enumerative polynomials.
When we get the recurrence relation of a polynomial and its corresponding grammar, 
the key point in solving this problem is deducing iterated expressions. Some problems may be difficult. 
Along the same lines, it would be interesting to deal with any other Eulerian-type polynomial that was collected by Hwang et al.~\cite{Hwang20}.
\section*{Acknowledgements.}
This paper is dedicated to Professor Fuji Zhang on the occasion of his 90th birthday.
The first author was supported by the National Natural Science Foundation of China (Grant number 12071063)
and Taishan Scholar Foundation of Shandong Province (No. tsqn202211146).
The second author was supported in part by National Natural Science Foundation of China (Grant number 12361072),
2023 Xinjiang Natural Science Foundation General Project, PR China (2023D01A36)
and 2023 Xinjiang Natural Science Foundation For Youths, PR China (2023D01B48).
The fourth author was supported by the National Science and Technology Council
(Grant number: MOST 112-2115-M-017-004).
\bibliographystyle{amsplain}

\end{document}